\newtheorem{prop}{Proposition}
\newtheorem{theorem}{Theorem}
\newtheorem{coro}{Corollary}
\newtheorem{lemma}{Lemma}
\newtheorem*{remark}{Remark}
\numberwithin{equation}{section}
\title{ Weak solutions  for  weak  turbulence models in electrostatic plasmas}
\author[*]{Kun Huang}
\author[$\dag$]{Irene M. Gamba}
\affil[*]{Institute for Fusion Studies, University of Texas at Austin}
\affil[$\dag$]{Oden Institute for Computational Sciences and Engineering, University of Texas at Austin}
\date{}
\begin{document}
\maketitle

\begin{abstract}
    The weak turbulence model, also known as the quasilinear theory in plasma physics, has been a cornerstone in modeling resonant particle-wave interactions in plasmas. This reduced model stems from the Vlasov-Poisson/Maxwell system under the weak turbulence assumption, incorporating the random phase approximation and ergodicity. The interaction between particles and waves (plasmons) can be treated as a stochastic process, whose transition probability bridges the momentum space and the spectral space. Therefore, the operators on the right hand side resemble collision forms, such as those in Boltzmann and Landau interacting models. For them, there have been results on well-posedness and regularity of solutions. However, as far as we know, there is no such preceding work for the quasilinear theory addressed in this manuscript.
    
    In this paper, we establish the existence of global weak solutions for the system modeling electrostatic plasmas in one dimension. Our key contribution consists of associating the original integral-differential system to a degenerate inhomogeneous porous medium equation(PME) with nonlinear source terms, and leveraging advanced techniques from the PME literature. This approach opens a novel pathway for analyzing weak turbulence models in plasma physics. Moreover, our work offers new tools for tackling related problems in the broader context of nonlinear nonlocal PDEs.

    \vspace{6pt}\textbf{Keywords:} kinetic equation, porous medium equation, weak turbulence model, quasilinear theory, plasma physics
\end{abstract}

\section{Introduction} 

    The study of quasilinear theory is motivated by the limitations of Landau damping, which describes how waves are damped by particles whose distribution satisfies the Penrose criterion. However, Landau damping is confined to a relatively narrow regime. Outside this regime, where waves do not simply damp, a back-reaction of the waves on the particles becomes significant. Quasilinear theory was developed to describe such back-reaction.
    
    Analogous to the Reynolds’ average for Navier-Stokes equations, the quasilinear system here describes the evolution of an "averaged" probability density function. For a derivation in mathematicians’ style, we refer the readers to \cite{bardos2021diffusion}. For an interpretation in quantum mechanical language from a probabilistic point of view, the book of Thorne and Blandford \cite{thorne2017modern} will be very helpful for the readers, where this system follows from the master equations for the stochastic emission/absorption process of plasmons (quantized plasma oscillation). 
    
    Since its introduction by Vedenov et al. \cite{vedenov1961nonlinear} and Drummond et al. \cite{drummond1962non}, this theory has been widely used in plasma physics to describe particle-wave interactions. However, despite its importance in applications, the mathematical analysis of the weak turbulence model has received limited attention.
    
    A recent paper by Bardos and Besse \cite{bardos2021diffusion} investigates the diffusion limit of the Vlasov-Poisson system and formally derives the diffusion matrix associated with the quasilinear theory in the regime where the typical autocorrelation time of particles goes to infinity. Yet, the well-posedness of quasilinear particle-wave systems remains an open problem. This paper is the first attempt to address this gap by proving the existence of weak solutions to the integral-differential system.
    
    Our approach begins by generalizing the method of Ivanov et al. \cite{ivanov1967quasilinear} to show that, in one dimension, the original integral-differential system can be transformed into a porous medium equation with nonlinear source terms. The unknowns of the original system: particle probability density function and wave spectral energy density, can both be expressed in terms of a single auxiliary function which solves that porous medium equation. Note that the decoupling relies on the resonance condition that relates the particle momentum with the wave vector. This approach may be
generalized to higher dimensional cases, which remains to be investigated. While this transformation opens the door to using techniques from the theory of porous medium equations, the problem remains highly nontrivial. The nonlinear source terms and the degenerate nature of the diffusion introduce significant analytical difficulties.

    Inspired by Vazquez\cite{vazquez2007porous}, we analyze a series of approximate problems with parameter $n$ and try to establish regularity estimates uniform in $n$, which allows us to pass the limit to infinity. 
    
    A major challenge arises in obtaining the strong convergence of the gradient square term, which is necessary for passing to the limit. Similar issues have been tackled by Abdellaoui, Peral and Walias\cite{abdellaoui2015some}, where the nonlinear source contains a gradient term to a positive power. A significant difference is that they require positive source, while in our problem, the gradient square term has a minus sign. Nevertheless, their methods for almost everywhere convergence remain applicable, provided we can find an alternative way to obtain suitable estimates.

The paper is organized as follows. Section \ref{sec_bg} introduces the physics background and mathematical motivation. In Section \ref{sec_pme}, we derive the equivalent porous medium equation and state the main result. In Section \ref{sec_extension}, we resort to the book of Ladyzenskaja et al.\cite{ladyzhenskaya1968linear} for the maximum principle and the well-posedness of the strictly coercive approximate problems with parameter $n$. Section \ref{sec_regularity} aims at proving the regularity estimates uniform in parameter $n$, which paves the way to convergence results. In Section \ref{sec_convergence}, the a.e. convergence of gradient term is proved, using the technique from Abdellaoui, Peral and Walias\cite{abdellaoui2015some}. The proof of main theorem is contained in Section \ref{sec_existence}. 

    \section{Background}\label{sec_bg}
The study of waves in plasmas dates back to 1946, when Landau\cite{landau1946vibrations} calculated the spectral form of Vlasov-Poisson equations. He discovered that the frequency of oscillations in a plasma contains a imagninary part, which means their amplitude changes in time. Let $f(\mathbf{p},t): \mathbb{R}^{d}_{p}\rightarrow [0,+\infty)$ be the particle probability density function, and let $W(\mathbf{k},t): \mathbb{R}^{d}_{k}\rightarrow [0,+\infty)$ be the wave spectral energy density, Landau's work\cite{landau1946vibrations} predicts that the waves' energy evolves as follows:
\begin{equation}\label{eq_landau}
    \partial_{t}W(\mathbf{k},t)=\left[\int_{\mathbb{R}_{p}^{d}}\left(\nabla_{p}f(\mathbf{p},t)\right)\cdot\left(\hat{\mathbf{k}}\otimes\hat{\mathbf{k}}\right)\delta(\omega(\mathbf{k};f)-\nabla_{p}E(\mathbf{p})\cdot\mathbf{k})\cdot\left(\nabla_{p}E(\mathbf{p})\right)d\mathbf{p}\right]W(\mathbf{k},t).
\end{equation}
where $E(\mathbf{p}) = \frac{\mathbf{p}^{2}}{2m_{e}}$ is the kinetic energy of a single particle, and $\omega(\mathbf{k})$ is the dispersion relation of plasma waves, which depends on the medium, i.e. the particle \textit{pdf}. It can be observed that for particular particle distribution $f$, all the waves are damped, such effect is called "Landau damping".

One might immediately raise the following question: When $f$ induces excitation instead of relaxation, the waves will grow exponentially, which apparently will not last long, given that the total energy is finite. So, what will happen in that case?

To answer that question, we need a theory to describe the back-reaction of the waves on the particles. According to Vedenov et al. \cite{vedenov1961nonlinear} and Drummond et al. \cite{drummond1962non}, the interaction with waves results in a diffusion effect: 
\begin{equation}\label{eq_vedenov}
    \partial_{t}f(\mathbf{p},t)=\nabla_{p}\cdot\left(\left[\int_{\mathbb{R}_{k}^{d}}W(\mathbf{k},t)\left(\hat{\mathbf{k}}\otimes\hat{\mathbf{k}}\right)\delta(\omega(\mathbf{k};f)-\nabla_{p}E(\mathbf{p})\cdot\mathbf{k})d\mathbf{k}\right]\cdot\nabla_{p}f(\mathbf{p},t)\right).
\end{equation}
As can be observed in numerical simulations\cite{grognard1985propagation}, the diffusion causes anisotropic flattening of particle distribution $f$, i.e. a smaller gradient $\nabla_{p}f$, which is why the growth of waves will not last long.

Equation (\ref{eq_landau}) and Equation (\ref{eq_vedenov}) combined together is a closed self-consistent system:
\begin{equation} \label{QLT system}
    \left\{ \begin{aligned}\partial_{t}f(\mathbf{p},t)&=\nabla_{p}\cdot\left(\left[\int_{\mathbb{R}_{k}^{d}}W(\mathbf{k},t)\Phi(\mathbf{p},\mathbf{k})d\mathbf{k}\right]\cdot\nabla_{p}f(\mathbf{p},t)\right),\\
    \partial_{t}W(\mathbf{k},t)&=\left[\int_{\mathbb{R}_{p}^{d}}\left(\nabla_{p}f(\mathbf{p},t)\right)\cdot\Phi(\mathbf{p},\mathbf{k})\cdot\left(\nabla_{p}E(\mathbf{p})\right)d\mathbf{p}\right]W(\mathbf{k},t),
    \end{aligned}
    \right.
\end{equation}
where
\begin{equation}
    \Phi(\mathbf{p},\mathbf{k}) = \left(\hat{\mathbf{k}}\otimes\hat{\mathbf{k}}\right)\delta(\omega(\mathbf{k};f)-\nabla_{p}E(\mathbf{p})\cdot\mathbf{k}).
\end{equation}

\begin{remark}
    For magnetized plasma, we have $\Phi(\mathbf{p},\mathbf{k})=\left(\mathbf{\beta}\otimes\beta\right)\mathcal{B}(\mathbf{p},\mathbf{k})$, see \cite{huang2023conservative}.
\end{remark}

Note that both equations share the same kernel $\Phi$, readers will find in \cite{thorne2017modern} that $\Phi$ is actually the transition probability of a quantum process, in which particles emit and absorb plasmons(quantized waves) stochastically. Therefore, in some sense, the operators on the right hand side resemble collision operators, such as the Boltzmann collision operator and the Landau collision operator. For them, there have been a lot of works studying the well-posedness and regularity of solutions. For quasilinear theory, as far as we know, there is none before us.

\section{From integral-differential equations to a degenerate PME}\label{sec_pme}
% The goal of this section is to transform the system into a single equation for only one unknown function.

In this paper we pursue a weak solution in the following sense,
    \begin{equation} \label{weak QLT 2}
        \left\{ \begin{aligned}-\int_{0}^{T}\left(f,\partial_{t}\zeta\right)_{p}dt-\left(f_{0},\zeta_{0}\right)_{p}&=\int_{0}^{T}\langle\nabla_{p}\cdot\Phi,fW\nabla_{p}\zeta\rangle_{pk}dt+\int_{0}^{T}\langle\Phi,fW\nabla_{p}\nabla_{p}\zeta\rangle_{pk}dt\\
        -\int_{0}^{T}\left(W,\partial_{t}\xi\right)_{k}dt-\left(W_{0},\xi_{0}\right)_{k}&=-\int_{0}^{T}\langle\nabla_{p}\cdot\Phi,fW\xi\nabla_{p}E\rangle_{pk}dt-\int_{0}^{T}\langle\Phi,fW\xi\nabla_{p}\nabla_{p}E\rangle_{pk}dt.
        \end{aligned}
        \right.
    \end{equation}

    For scenarios of interest in physics, the Bohm-Gross dispersion relation is widely used to model warm plasmas,
    \begin{equation*}
        \omega(\mathbf{k};f)=\left(1+3\lambda_{D}^{2}(f)\mathbf{k}^{2}\right)^{1/2}\omega_{pe},
    \end{equation*}
    where the Debye length $\lambda_{D}$ is proportional to the thermal speed, i.e. the variance of particle \textit{pdf}, and the constant $\omega_{pe}$ is the plasma frequency.
    
    Assuming that the diffusion process rarely affects the particles' temperature, it is reasonable to write the following approximation,
    \begin{equation*}
        \omega(\mathbf{k})=\left(1+3\lambda_{D}^{2}(f_{0})\mathbf{k}^{2}\right)^{1/2}\omega_{pe}
    \end{equation*}

    \bigskip

    The most important feature of the system lies in the fact that particles and waves interact through the absorption/emission kernel, which contains a Dirac delta. In physics this is called "resonance", since particles with certain momentum only interact with waves with some particular wave vectors. The one dimensional plasma is special, because in this case each momentum $\mathbf{p}$ corresponds to only one wave vector $\mathbf{k}$, and vice versa. This is the reason that the following trick is feasible.

    Normalize all the quantities with time unit $\frac{1}{\omega_{pe}}$, mass unit $m_{e}$ and length unit $\frac{c}{\omega_{pe}}$, The equation in one dimensional case reads,
    \begin{equation}\label{normalizedeq}
        \left\{ \begin{aligned}\partial_{t}f & =\partial_{p}\left(\left[\int_{\mathbb{R}_{k}}W\delta\left(\left(1+\lambda^{2}k^{2}\right)^{1/2}-pk\right)dk\right]\partial_{p}f\right),\\
    \partial_{t}W & =\left[\int_{\mathbb{R}_{p}}\left(\partial_{p}f\right)p\delta\left(\left(1+\lambda^{2}k^{2}\right)^{1/2}-pk\right)dp\right]W,
    \end{aligned}
    \right.
    \end{equation}
    where $\lambda = \frac{\sqrt{3}\omega_{pe}\lambda_{D}}{c}$.

We define the distributions in (\ref{weak QLT 2}) as follows,
\begin{equation}\label{def_distrib}
    \begin{split}
        \langle\delta\left(\left(1+\lambda^{2}k^{2}\right)^{1/2}-pk\right),y(p)z(k)\rangle_{pk}&\coloneqq\int_{(-\infty,-\lambda)\cup(\lambda,+\infty)}\frac{|p|}{p^{2}-\lambda^{2}}y(p)z\left(\frac{\text{sgn}(p)}{\sqrt{p^{2}-\lambda^{2}}}\right)dp\\    \langle\partial_{p}\left(\delta\left(\left(1+\lambda^{2}k^{2}\right)^{1/2}-pk\right)\right),y(p)z(k)\rangle_{pk}&\coloneqq\int_{(-\infty,-\lambda)\cup(\lambda,+\infty)}y(p)\partial_{p}\left(\frac{|p|}{p^{2}-\lambda^{2}}z(\frac{\text{sgn}(p)}{\sqrt{p^{2}-\lambda^{2}}})\right)dp
    \end{split}
\end{equation}

Note that the resonance condition
\begin{equation*}
    \left(1+\lambda^{2}k^{2}\right)^{1/2}-pk=0,
\end{equation*}

is equivalent to
\begin{equation*}
    p=\text{sgn}(k)\left(k^{-2}+\lambda^{2}\right)^{1/2},
\end{equation*}

and

\begin{equation*}
    k=\text{sgn}(p)\left(p^{2}-\lambda^{2}\right)^{-1/2}.
\end{equation*}

As a generalization of the trick in \cite{ivanov1967quasilinear}, we introduce the following auxiliary function $u(p):(-\infty,-\lambda)\cup(\lambda,\infty)\rightarrow\mathbb{R}^{+} \cup \{0\}$,
\begin{equation}\label{def_auxiliary}
    u(p,t) \coloneqq \left(p^{2}-\lambda^{2}\right)^{-3/2}W\left(\text{sgn}(p)\left(p^{2}-\lambda^{2}\right)^{-1/2},t\right)
\end{equation}

Without loss of generality, consider the positive half domain, i.e. $p\in (\lambda, +\infty)$. The kinetic system (\ref{normalizedeq}) is formally equivalent to

\begin{equation}\label{eq_Ivanov}
    \left\{ 
    \begin{aligned}
    \partial_{t}f & =\partial_{p}\left(p\left(p^{2}-\lambda^{2}\right)^{1/2}u\partial_{p}f\right),\\
 \partial_{t}u & =p\left(p^{2}-\lambda^{2}\right)^{1/2}\left(\partial_{p}f\right)u,      
 \end{aligned}
 \right.
\end{equation}

Denote $u(p,0)$ as $\varphi_{0}(p)$, note that
\begin{equation*}
    \partial_{t}f=\partial_{p}(\partial_{t}u)\Rightarrow f=\partial_{p}(u-\varphi_{0})+f_{0}.
\end{equation*}

Substitute the above identity into the second row of (\ref{eq_Ivanov}), it follows that
\begin{equation}\label{ivanov}
    \partial_{t}u=p\left(p^{2}-\lambda^{2}\right)^{1/2}\partial_{p}\left(\partial_{p}u-\partial_{p}\varphi_{0}+f_{0}\right)u(p,t)=\gamma u\partial_{p}^{2}u+g_{0}u,
\end{equation}

where $g_{0}(p) \coloneqq p\left(p^{2}-\lambda^{2}\right)^{1/2}\partial_{p}\left(f_{0}-\partial_{p}\varphi_{0}\right)$ and $\gamma(p)\coloneqq p\sqrt{p^2-\lambda^2}$.

The above equation can also be written in the following divergence form,
\begin{equation}\label{eq_pme}
    \partial_{t}u=\partial_{p}\left(\frac{\gamma}{2}\partial_{p}u^{2}\right)-\gamma\left(\partial_{p}u\right)^{2}-\frac{\partial_{p}\gamma}{2}\partial_{p}u^{2}+g_{0}u.
\end{equation}

\begin{remark}
    An interesting observation is that, the connection between porous medium equations and coupled kinetic systems not only appears here, but also in the Rosseland approximation for radiative transfer equations\cite{bardos1987rosseland}. However, we would like to point out that they are not of the same nature, since the Rosseland approximation is related to asymptotic behavior, while the decoupling trick here does not depend on any scaling parameter.
\end{remark}

Note that once we have a solution $u$ to the above equation, both particle \textit{pdf} $f$ and wave \textit{sed} $W$ are formally determined. Inspired by the above formal derivation, we present a rigorous statement in the following theorem. 

\begin{theorem}
    Given initial condition,

    \begin{equation*}
        \left \{ 
        \begin{aligned}
            f(p,0)&=f_{0}(p), \ & p\in\mathbb{R}_{p},\\
            W(k,0)&=W_{0}(k), \ & k\in\mathbb{R}_{k},
        \end{aligned}
        \right .
    \end{equation*}
    where $f_{0}\in C^{\infty}(\mathbb{R}_{p})$ and $W_{0} \in C^{\infty}(\mathbb{R}_{k})$.

    Define the domain for auxiliary function as follows,
    \begin{equation}\label{Omega_star}
        \begin{split}
            \Omega^{*} &\coloneqq (\lambda, +\infty)\\
            Q^{*}_{T} &\coloneqq \Omega^{*} \times (0,T).
        \end{split}
    \end{equation}

    In addition, define $\varphi_{0}:\Omega^{*}\rightarrow [0,+\infty)$ and $g_{0}:\Omega^{*}\rightarrow \mathbb{R}$,
    \begin{equation*}
        \begin{split}
            \varphi_{0}(p)&\coloneqq\left(p^{2}-\lambda^{2}\right)^{-\frac{3}{2}}W_{0}\left(\frac{1}{\sqrt{p^{2}-\lambda^{2}}}\right)\\
            g_{0}(p)&\coloneqq p\sqrt{p^{2}-\lambda^{2}}\partial_{p}\left(f_{0}(p)-\partial_{p}\varphi_{0}(p)\right)
        \end{split}
    \end{equation*}

    Suppose there is a solution $u \in L^{2}(0,T;W_{0}^{1,2}(\Omega^{*}))$ such that, for any $\eta\in \mathcal{V}\coloneqq\left\{ \eta\in C^{1}(Q_{T}^{*})\cap L^{\infty}(Q_{T}^{*}):\eta(\cdot,T)=0\right\} $, the following identity holds,
    \begin{equation*}
        -\left(u,\partial_{t}\eta\right)_{Q_{T}^{*}}+\left(\frac{\gamma}{2}\partial_{p}u^{2},\partial_{p}\eta\right)_{Q_{T}^{*}}=\left(-\gamma(\partial_{p}u)^{2},\eta\right)_{Q_{T}^{*}}+\left(-\left(\frac{\partial_{p}\gamma}{2}\right)\partial_{p}u^{2},\eta\right)_{Q_{T}^{*}}+\left(g_{0}u,\eta\right)_{Q_{T}^{*}}+\left(\varphi_{0},\eta(p,0)\right)_{\Omega^{*}}.
    \end{equation*}

    Define
    \begin{equation}
        \begin{split}
            W(k,t)&\coloneqq\begin{cases}
        \frac{1}{k^{3}}u\left(\frac{\left(1+\lambda^{2}k^{2}\right)^{1/2}}{k},t\right), & k\in\left(0,+\infty\right)\\
        0, & \text{otherwise}
        \end{cases}\\
        f(p,t)&\coloneqq f_{0}(p)+\begin{cases}
\partial_{p}u(p,t)-\partial_{p}\varphi_{0}(p), & p\in\left(\lambda,+\infty\right)\\
0, & \text{otherwise}
\end{cases}
        \end{split}
        \label{def_fW}
    \end{equation}

If there exists a positive constant $\varepsilon$ such that $\text{supp}\left(W(\cdot, t) \right) \subset (\varepsilon, \frac{1}{\varepsilon})$ for any $t\in(0,T)$, then $f(p,t)$ and $W(k,t)$ satisfy Equation (\ref{weak QLT 2}) for any 
\begin{equation*}
    \zeta \in \left\{ \zeta(p,t)\in C^{\infty}\left(\mathbb{R}_{p}\times[0,T]\right):\zeta(\cdot,t)\in C_{c}^{\infty}\left(\mathbb{R}_{p}\right)\ \forall t\in[0,T]\ \text{and}\ \zeta(\cdot,T)=0\right\}, 
\end{equation*}

and

\begin{equation*}
    \xi\in\left\{ \xi(k,t)\in C^{\infty}\left(\mathbb{R}_{k}\times[0,T]\right):\xi(\cdot,t)\in C_{c}^{\infty}\left(\mathbb{R}_{k}\right)\ \forall t\in[0,T]\ \text{and}\ \xi(\cdot,T)=0\right\}.
\end{equation*}

\end{theorem}

\begin{proof}
    By definition of $f(p,t)$ in (\ref{def_fW}), 
    \begin{equation*}
        \begin{split}
            &-\int_{0}^{T}\left(f,\partial_{t}\zeta\right)_{p}dt-\left(f_{0},\zeta_{0}\right)_{p}\\
            =&-\int_{0}^{T}\int_{\lambda}^{+\infty}\left[\left(\partial_{p}u-\partial_{p}\varphi_{0}\right)\partial_{t}\zeta\right]dpdt\\
            =&\int_{0}^{T}\int_{\lambda}^{+\infty}\left[\left(u-\varphi_{0}\right)\partial_{t}\partial_{p}\zeta\right]dpdt-\left.\int_{0}^{T}\left[\left(u-\varphi_{0}\right)\partial_{p}\zeta\right]dt\right|_{\lambda}^{+\infty}\\
            =&\int_{0}^{T}\int_{\lambda}^{+\infty}\left[\left(u-\varphi_{0}\right)\partial_{t}\partial_{p}\zeta\right]dpdt.\\
            =&\left(u,\partial_{t}\left(\partial_{p}\zeta\right)\right)_{Q_{T}^{*}}+\left(\varphi_{0},\partial_{p}\zeta_{0}\right)_{\Omega^{*}}\\
            =&\left(\frac{\gamma}{2}\partial_{p}u^{2},\partial_{p}^{2}\zeta\right)_{Q_{T}^{*}}+\left(\gamma(\partial_{p}u)^{2},\partial_{p}\zeta\right)_{Q_{T}^{*}}+\left(\left(\frac{\partial_{p}\gamma}{2}\right)\partial_{p}u^{2},\partial_{p}\zeta\right)_{Q_{T}^{*}}-\left(g_{0}u,\partial_{p}\zeta\right)_{Q_{T}^{*}}
        \end{split}
    \end{equation*}

    Meanwhile, by definition of the distributions in (\ref{def_distrib}),
    \begin{equation*}
        \begin{split}
            &\int_{0}^{T}\langle\nabla_{p}\cdot\Phi,fW\nabla_{p}\zeta\rangle_{pk}dt+\int_{0}^{T}\langle\Phi,fW\nabla_{p}\nabla_{p}\zeta\rangle_{pk}dt\\
            =&\int_{0}^{T}\int_{+\lambda}^{+\infty}\left[f\partial_{p}\left(\frac{p}{p^{2}-\lambda^{2}}W(\frac{1}{\sqrt{p^{2}-\lambda^{2}}},t)\partial_{p}\zeta\right)\right]dpdt
        \end{split}
    \end{equation*}

    Note that $W(\frac{1}{\sqrt{p^{2}-\lambda^{2}}},t)=\left(p^{2}-\lambda^{2}\right)^{\frac{3}{2}}u(p,t)$ on $(\lambda,+\infty)$, hence
    \begin{equation*}
        \begin{split}
            &\int_{0}^{T}\langle\nabla_{p}\cdot\Phi,fW\nabla_{p}\zeta\rangle_{pk}dt+\int_{0}^{T}\langle\Phi,fW\nabla_{p}\nabla_{p}\zeta\rangle_{pk}dt\\
            =&\int_{0}^{T}\int_{+\lambda}^{+\infty}\left[f\partial_{p}\left(p\sqrt{p^{2}-\lambda^{2}}u(p,t)\partial_{p}\zeta\right)\right]dpdt\\
            =&\int_{0}^{T}\int_{+\lambda}^{+\infty}\left[f\partial_{p}\left(\gamma(p)u(p,t)\partial_{p}\zeta\right)\right]dpdt.
        \end{split}
    \end{equation*}

Then it can be verified that since $\partial_{p}\zeta(p,t) \in \mathcal{V}$,
\begin{equation*}
    -\int_{0}^{T}\left(f,\partial_{t}\zeta\right)_{p}dt-\left(f_{0},\zeta_{0}\right)_{p}= \int_{0}^{T}\langle\nabla_{p}\cdot\Phi,fW\nabla_{p}\zeta\rangle_{pk}dt+\int_{0}^{T}\langle\Phi,fW\nabla_{p}\nabla_{p}\zeta\rangle_{pk}dt
\end{equation*}

Next, consider the equation for $W(k,t)$. By definition of $W(k,t)$ in (\ref{def_fW}),
\begin{equation*}
    \begin{split}
        \int_{0}^{T}\int_{-\infty}^{+\infty}\left[W\partial_{t}\xi\right]dtdk &= \int_{0}^{T}\int_{0}^{+\infty}\left[\frac{1}{k^{3}}u\left(\frac{\sqrt{1+\lambda^{2}k^{2}}}{k},t\right)\partial_{t}\xi(k,t)\right]dkdt\\
        &=\int_{0}^{T}\int_{\lambda}^{+\infty}\left[u\left(p,t\right)\partial_{t}\left(p\xi\left(\frac{1}{\sqrt{p^{2}-\lambda^{2}}},t\right)\right)\right]dpdt
    \end{split}
\end{equation*}

Meanwhile, by definition of the distributions in (\ref{def_distrib}),

\begin{equation*}
    \begin{split}
        &-\int_{0}^{T}\langle\nabla_{p}\cdot\Phi,fW\xi\nabla_{p}E\rangle_{pk}dt-\int_{0}^{T}\langle\Phi,fW\xi\nabla_{p}\nabla_{p}E\rangle_{pk}dt\\
        =&-\int_{0}^{T}\int_{\lambda}^{+\infty}\left[f(p)\partial_{p}\left(p\frac{p}{p^{2}-\lambda^{2}}W\left(\frac{1}{\sqrt{p^{2}-\lambda^{2}}},t\right)\xi\left(\frac{1}{\sqrt{p^{2}-\lambda^{2}}},t\right)\right)\right]dpdt\\
        =&-\int_{0}^{T}\int_{\lambda}^{+\infty}\left[f(p)\partial_{p}\left(\gamma u\left(p\xi\left(\frac{1}{\sqrt{p^{2}-\lambda^{2}}},t\right)\right)\right)\right]dpdt
    \end{split}
\end{equation*}

Analogous to the procedure for $f(p,t)$, since $p\cdot \xi \left(\frac{1}{\sqrt{p^2-\lambda^2}},t\right) \in \mathcal{V}$, it can be verified that
\begin{equation*}
    -\int_{0}^{T}\left(W,\partial_{t}\xi\right)_{k}dt-\left(W_{0},\xi_{0}\right)_{k}=-\int_{0}^{T}\langle\nabla_{p}\cdot\Phi,fW\xi\nabla_{p}E\rangle_{pk}dt-\int_{0}^{T}\langle\Phi,fW\xi\nabla_{p}\nabla_{p}E\rangle_{pk}dt.
\end{equation*}

\end{proof}

\begin{remark}
    It takes no extra effort to extend the above conclusion to the negative half domain $p\in(-\infty,-\lambda)$ by concatenating solutions. In that case, instead of requiring $\text{supp}\left(W(\cdot, t) \right) \subset (\varepsilon, \frac{1}{\varepsilon})$, we need $\text{supp}\left(W(\cdot, t) \right) \subset (-\frac{1}{\varepsilon}, -\varepsilon) \cup(\varepsilon, \frac{1}{\varepsilon})$.
\end{remark}

Now it remains to show the existence of $u$. The main result of the paper can be stated as follows. 

Note that in the rest of the article, to keep it consistent with existing literature in mathematics, we will use $x$ instead of $p$. 

\begin{theorem}
    \label{thm_exist}
    If $g_{0}(x)\in C^{\infty}(\Omega^{*}) \cap L^{\infty}(\Omega^{*})$, $\varphi_{0}(x) \in C^{\infty}_{0}(\Omega^{*})$, then there exists a non-negative weak solution to Equation(\ref{eq_pme}), $u(x,t) \in L^{q}(0,T;W_{0}^{1,q}(\Omega^{*}))$ with $q\in [1,+\infty)$ such that, for any $\eta(x,t) \in \mathcal{V}\coloneqq\left\{ \eta\in C^{1}(Q_{T}^{*})\cap L^{\infty}(Q_{T}^{*}):\eta(\cdot,T)=0\right\} $, the following identity holds,
    \begin{equation}\label{eq_weak_pme}
        -\left(u,\partial_{t}\eta\right)_{Q_{T}^{*}}+\left(\frac{\gamma}{2}\partial_{x}u^{2},\partial_{x}\eta\right)_{Q_{T}^{*}}=\left(-\gamma(\partial_{x}u)^{2},\eta\right)_{Q_{T}^{*}}+\left(-\left(\frac{\partial_{x}\gamma}{2}\right)\partial_{x}u^{2},\eta\right)_{Q_{T}^{*}}+\left(g_{0}u,\eta\right)_{Q_{T}^{*}}+\left(\varphi_{0},\eta(x,0)\right)_{\Omega^{*}},
    \end{equation}

\end{theorem}

\section{Lifted extension of approximate solutions}\label{sec_extension}
The equation (\ref{eq_pme}) is difficult to tackle due to its degeneracy. Therefore we consider a series of approximate problems on a cut-off domain first. They are arbitrarily close to the original problem, but each one of them is strictly coercive, which ensures the existence of classical solutions. Furthermore, these approximate solutions have enough regularity, allowing us to test with various functions and to obtain bounds that are uniform in the parameter $n$.

    Given initial condition $\varphi_{0}\in C^{\infty}_{0}(\Omega^{*})$, we choose a cut-off domain $\Omega=(a,b)$ where 
    \begin{equation}\label{eq:cut-off_domain}
        \begin{split}
            a&=\frac{1}{2}\inf\left(\text{supp}\varphi_{0}\right) > \lambda,\\
            b&=2\sup\left(\text{supp}\varphi_{0}\right)<+\infty,
        \end{split}
    \end{equation}

    apparently $\varphi_{0} \in C^{\infty}_{0}(\Omega)$.
    
Consider the following approximate problem (\ref{eq_Sn}) of Equation(\ref{eq_pme}) on the cut-off domain $\Omega=(a, b)$,
\begin{equation}\label{eq_Sn}\tag{$\mathcal{S}_{n}$}
    \left\{ \begin{aligned}
    &\partial_{t}u_{n}=\gamma \mathcal{P}_{n}(u_{n})\partial_{x}^{2}u_{n}+g_{0}u_{n}, &\text{in }\Omega\\
    &u_{n}(x,t)=0, &\text{on }\partial\Omega \\
    &u_{n}(x,0)=\varphi_{0}(x),&\text{in }\Omega
    \end{aligned}\right.
\end{equation}
where $\mathcal{P}_{n}\in C^{\infty}(\mathbb{R})$ is a family of functions with the following properties,
\begin{enumerate}[label=(\roman*)]
    \item $\mathcal{P}_{n}(y)\ge\frac{1}{2n},\forall y\in\mathbb{R}$
    \item $\mathcal{P}_{n}(y)=y+\frac{1}{n},\forall y\in\mathbb{R}^{+}$
    \item $\mathcal{P}^{\prime}_{n}(y)\geq0$
\end{enumerate}

The following maximum principle can be found in Theorem 2.1, Chapter I of Ladyzenskaja et al.'s book\cite{ladyzhenskaya1968linear}. Note that the bounds are independent of the parameter $n$.
%chapter I is enough because we regard u_{n} as given coefficient

\begin{theorem}{(maximum principle)}
If $u_{n}$ is a classical solution to the approximate problem (\ref{eq_Sn}), then $u_{n}(x,t)$ satisfies the maximum principle on $\overline{Q_{T}}$:
\begin{equation*}
    \left\{ \begin{aligned}
    &u_{n}(x,t) \geq 0\\
    &u_{n}(x,t) \leq max(\varphi_{0})\exp(\max(|g_{0}|)t)\\
    \end{aligned}\right.
\end{equation*}
\label{thm_max}
\end{theorem}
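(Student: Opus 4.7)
The plan is to treat the equation for $u_n$ as a linear parabolic equation with the frozen coefficient $a(x,t) := x^2 \mathcal{P}_n(u_n(x,t))$. By property (i) of $\mathcal{P}_n$ and the fact that $x \geq x_a > 0$ on $\overline{\Omega}$, we have $a(x,t) \geq x_a^2/(2n) > 0$, so the operator $\partial_t - a\partial_x^2 - g_0$ is uniformly parabolic for each fixed $n$, with coefficients as regular as $u_n$ itself. This places us exactly in the setting of the classical weak maximum principle of Chapter~I of Ladyzenskaja's book, and both bounds will follow from standard exponential substitutions. Crucially, the constants appearing in the substitutions depend only on $g_0$ and $\varphi_0$, so the resulting estimates are automatically independent of $n$.

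For the lower bound, introduce $v := e^{-\lambda t} u_n$ with $\lambda > \max_{\overline{\Omega}} g_0$; direct computation gives
\[
\partial_t v - a\,\partial_x^2 v + (\lambda - g_0)\,v = 0,
\]
whose zeroth-order coefficient $\lambda - g_0$ is strictly positive. The parabolic weak minimum principle then yields $\min_{\overline{Q_T}} v \geq \min(0,\,\min_{\partial_P Q_T} v)$, where $\partial_P Q_T := (\partial\Omega\times[0,T])\cup(\overline{\Omega}\times\{0\})$. On the lateral part $v\equiv 0$, and at $t=0$ we have $v=\varphi_0\geq 0$ (the nonnegativity of $\varphi_0$ being implicit from the physical meaning of $u$ as a rescaled spectral density). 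Hence $v\geq 0$, so $u_n \geq 0$ throughout $\overline{Q_T}$.

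For the upper bound, use the nonnegativity just established and set $w := e^{-\mu t} u_n$ with $\mu := \max_{\overline{\Omega}} |g_0|$. The same computation produces
\[
\partial_t w - a\,\partial_x^2 w = (g_0-\mu)\,w \leq 0,
\]
because $g_0-\mu\leq 0$ and $w\geq 0$. Thus $w$ is a subsolution of a uniformly parabolic operator without zeroth-order term, and the weak maximum principle gives $\max_{\overline{Q_T}} w \leq \max_{\partial_P Q_T} w = \max_{\overline{\Omega}}\varphi_0$ (lateral values vanish, initial values equal $\varphi_0$). Multiplying through by $e^{\mu t}$ yields the claimed bound $u_n \leq \max(\varphi_0)\exp(\max|g_0|\,t)$.

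No real obstacle is anticipated: this is a textbook application of the weak maximum principle once one observes the uniform parabolicity of the frozen equation. The only subtlety worth flagging is the order of the two arguments — the nonnegativity $u_n \geq 0$ must be proved \emph{first}, since the sign of $w$ is what converts the term $(g_0-\mu)w$ into a one-sided inequality in the upper-bound step.
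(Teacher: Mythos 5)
Your proposal is correct and follows the same route the paper takes: the paper simply cites the classical weak maximum principle (Theorem 2.1, Chapter I of Ladyzenskaja) for the linear uniformly parabolic operator obtained by freezing the coefficient $x^{2}\mathcal{P}_{n}(u_{n})$, and your exponential substitutions $e^{-\lambda t}u_{n}$ and $e^{-\mu t}u_{n}$ are exactly the standard argument that citation encapsulates, with the $n$-independence of the bounds coming from the constants depending only on $g_{0}$ and $\varphi_{0}$. The only caveat, which you correctly flag, is that $\varphi_{0}\geq 0$ is needed for the lower bound and is left implicit in the paper as well.
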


For the existence of classical solution to the approximate problems (\ref{eq_Sn}), we refer the readers to Theorem 6.1, Chapter V of Ladyzenskaja et al.'s book\cite{ladyzhenskaya1968linear}.
\begin{theorem}[\textbf{classical solution}]
For any $T>0$, the problem (\ref{eq_Sn}) admits an unique classical solution $u_{n}(x,t)$ on $\overline{Q_{T}}$. Moreover, $u_{n}(x,t)$ belongs to Hölder space $\mathcal{H}^{2+\beta,(2+\beta)/2}(\overline{Q_{T}}) \cap \mathcal{H}^{3+\beta,(3+\beta)/2}(Q_{T})$.
%when $g_{0}(x)\in C^{\infty}(\Omega^{*}) \cap L^{\infty}(\Omega^{*})$, and  $\varphi_{0}\in C^{\infty}_{0}(\Omega^{*})$.
\label{thm_classical_sol}
\end{theorem}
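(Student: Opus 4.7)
My plan is to treat the approximate problem $(\mathcal{S}_{n})$ as a uniformly parabolic quasilinear Dirichlet problem with smooth data and invoke the standard Schauder machinery. The crucial input, already supplied by Theorem~\ref{thm_max}, is the a priori bound $0\le u\le M$ with $M=\max(\varphi_{0})\exp(\max(|g_{0}|)T)$. Combined with $x_{a}>0$ and the properties of $\mathcal{P}_{n}$, this gives
$$
0<\frac{x_{a}^{2}}{2n}\le x^{2}\mathcal{P}_{n}(u)\le x_{b}^{2}\,\mathcal{P}_{n}(M)<\infty
\quad\text{on }\overline{Q_{T}},
$$
so the principal coefficient is uniformly elliptic with $C^{\infty}$ dependence on $(x,u)$. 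I would then freeze the nonlinearity: given $v\in C^{\beta,\beta/2}(\overline{Q_{T}})$ with $0\le v\le M$, solve the linear parabolic problem $\partial_{t}u=x^{2}\mathcal{P}_{n}(v)\,\partial_{x}^{2}u+g_{0}u$ with the same boundary and initial data. Classical linear Schauder theory (Ladyzenskaja, Ch.~III--IV) delivers a unique solution $u\in\mathcal{H}^{2+\beta,(2+\beta)/2}(\overline{Q_{T}})$ whose norm is bounded in terms of $\|v\|_{C^{\beta,\beta/2}}$ and the data. Setting $\mathcal{T}:v\mapsto u$, the linear maximum principle keeps $\mathcal{T}$ inside the closed convex set $K=\{v:0\le v\le M\}\subset C^{\beta,\beta/2}$, the Schauder bound together with Arzel\`a--Ascoli makes $\mathcal{T}(K)$ precompact, and a standard stability estimate gives continuity of $\mathcal{T}$. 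Schauder's fixed-point theorem then furnishes $u_{n}=\mathcal{T}(u_{n})$, the sought classical solution.

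With $u_{n}\in\mathcal{H}^{2+\beta,(2+\beta)/2}$ in hand, the coefficient $x^{2}\mathcal{P}_{n}(u_{n})$ inherits the same H\"older regularity, so differentiating the equation in $x$ and re-applying interior Schauder estimates bootstraps the solution to $\mathcal{H}^{3+\beta,(3+\beta)/2}(Q_{T})$, as claimed. Uniqueness follows by subtracting two candidate solutions, multiplying by the difference, integrating by parts on $\Omega$, and closing with Gr\"onwall's inequality using uniform parabolicity and the Lipschitz continuity of $\mathcal{P}_{n}$.

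The main obstacle I anticipate is the compatibility conditions at the parabolic corners $(x_{a},0)$ and $(x_{b},0)$. The zero-order condition $\varphi_{0}|_{\partial\Omega}=0$ is inherited from the construction that led to problem $(\mathcal{S})$, and the first-order condition $\partial_{t}u|_{t=0}=x^{2}\mathcal{P}_{n}(\varphi_{0})\partial_{x}^{2}\varphi_{0}+g_{0}\varphi_{0}=0$ on $\partial\Omega$ reduces, since $\varphi_{0}=0$ there, to $\partial_{x}^{2}\varphi_{0}=0$ on $\partial\Omega$---exactly the hypothesis already imposed in Theorem~\ref{thm_exist}. Higher-order compatibility conditions typically fail for generic data, and this is precisely what prevents the $\mathcal{H}^{3+\beta,(3+\beta)/2}$ regularity from extending up to $\overline{Q_{T}}$, explaining why the theorem restricts that claim to the open set $Q_{T}$.
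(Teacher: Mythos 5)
Your route is genuinely different from the paper's. The paper does not reprove existence at all: it rewrites $(\mathcal{S}_{n})$ in divergence form, packages the nonlinearity into Ladyzenskaja's functions $a_{1},a,A$ and the compatibility function $\varphi(x,t)$, and then invokes Theorem 6.1 of Chapter V of \cite{ladyzhenskaya1968linear} after verifying its structure conditions (a)--(f) in the Appendix. You instead reconstruct the existence proof via a frozen-coefficient Schauder fixed point. Your identification of the first-order compatibility condition (that $\partial_{t}u|_{t=0}=x^{2}\mathcal{P}_{n}(\varphi_{0})\partial_{x}^{2}\varphi_{0}+g_{0}\varphi_{0}$ must vanish on $\partial\Omega$, which with $\varphi_{0}|_{\partial\Omega}=0$ reduces to $\partial_{x}^{2}\varphi_{0}=0$ there) is correct and in fact more explicit than what the paper writes; the uniqueness and interior bootstrap arguments are also sound in outline.

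There is, however, a genuine gap in the compactness step of your fixed-point argument. Your set $K=\{v:0\le v\le M\}$ is closed and convex in $C^{\beta,\beta/2}(\overline{Q_{T}})$ but \emph{unbounded} in the H\"older seminorm, and the only quantitative control you have on $\mathcal{T}(v)$ is the linear Schauder estimate, whose constant depends on the $C^{\beta,\beta/2}$ norm of the coefficient $x^{2}\mathcal{P}_{n}(v)$, hence on $\lVert v\rVert_{C^{\beta,\beta/2}}$. Consequently $\mathcal{T}(K)$ is not bounded in $\mathcal{H}^{2+\beta,(2+\beta)/2}(\overline{Q_{T}})$, and Arzel\`a--Ascoli does not yield precompactness as you claim. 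To close the argument you need a H\"older estimate for the frozen problem that is uniform over $K$, i.e.\ one depending only on the ellipticity bounds $x_{a}^{2}/(2n)\le x^{2}\mathcal{P}_{n}(v)\le x_{b}^{2}\mathcal{P}_{n}(M)$ and the $L^{\infty}$ data and \emph{not} on any modulus of continuity of $v$: this is the De Giorgi--Nash--Moser / Ladyzhenskaya--Ural'tseva estimate for linear parabolic equations with merely bounded measurable coefficients. With it one fixes a small exponent $\alpha_{0}$ and radius $R_{0}$, runs the fixed point on the ball of radius $R_{0}$ in $C^{\alpha_{0},\alpha_{0}/2}$ intersected with $\{0\le v\le M\}$, and then bootstraps to $\mathcal{H}^{2+\beta,(2+\beta)/2}$. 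That uniform H\"older estimate is precisely the hard analytic content packaged inside the quasilinear theorem the paper cites, so it cannot be obtained from the frozen-coefficient Schauder bound alone.
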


\begin{remark}
    The Hölder space $\mathcal{H}^{2+\beta,(2+\beta)/2}(\overline{Q_{T}})$ consists of all the functions that are $\beta$-Hölder continuous in $\overline{Q_{T}}$, together with all derivatives of the form $D^{r}_{t}D^{s}_{x}$ for $2r+s<2$. Same for $\mathcal{H}^{3+\beta,(3+\beta)/2}(Q_{T})$.
\end{remark}

\begin{proof}

To begin with, write the equation in divergence form. 
    \begin{equation*}
        \begin{split}
            \mathcal{L}_{n}u\equiv&\partial_{t}u_{n}-\gamma \mathcal{P}_{n}(u_{n})\partial_{x}^{2}u_{n}-g_{0}u_{n}\\
            =&\partial_{t}u_{n}-\partial_{x}\left(\gamma \mathcal{P}_{n}(u_{n})\partial_{x}u_{n}\right)+\gamma \mathcal{P}_{n}^{\prime}(u_{n})\left(\partial_{x}u_{n}\right)^{2}+\left(\partial_{x}\gamma \right)\mathcal{P}_{n}(u_{n})\partial_{x}u_{n}-g_{0}u_{n}
        \end{split}
    \end{equation*}

Define 
\begin{equation*}
    \varphi(x,t)\coloneqq\varphi_{0}(x)+\left[\gamma \mathcal{P}_{n}^{\prime}(\varphi_{0})\partial_{x}^{2}\varphi_{0}+g_{0}\varphi_{0}\right]t
\end{equation*}
Since $\varphi_{0} \in C^{\infty}_{0}(\Omega)$, the initial-boundary condition can be written as 
\begin{equation*}
    u_{n}|_{\Gamma_{T}} = \varphi |_{\Gamma_{T}}
\end{equation*}
where $\Gamma_{T}=\partial\Omega\times[0,T]\cup\{(x,t):x\in\Omega,t=0\}$.

In accordance with the notation of Ladyzenskaja, define
\begin{equation*}
    \begin{split}
        a_{1}(x,t,u,p)&\coloneqq \gamma(x) \mathcal{P}_{n}(u)p\\
        a(x,t,u,p)&\coloneqq \gamma(x) \mathcal{P}_{n}^{\prime}(u)p^{2}+ \partial_{x}\gamma (x)\mathcal{P}_{n}(u)
        p-g_{0}(x)u\\
        A(x,t,u,p)&\coloneqq-g_{0}(x)u
    \end{split}
\end{equation*}

Note that the $p$ variable here is just a notation from Ladyzenskaja representing $\partial_{x}u$, not particle momentum. 
The conditions in Ladyzenskaja's theorem can be easily verified. See Appendix.

\end{proof}

By Theorem \ref{thm_max}, $u_{n}$ is always non-negative, therefore by the second property of function $\mathcal{P}_{n}$, the problem (\ref{eq_Sn}) in divergence form is as follows,
\begin{equation}
    \label{divergence}
    \left\{ \begin{aligned}
    &\partial_{t}u_{n}=\partial_{x}\left(\gamma(u_{n}+\frac{1}{n})\partial_{x}u_{n}\right)-\gamma\left(\partial_{x}u_{n}\right)^{2}-\frac{\partial_{x}\gamma }{2}\partial_{x}(u_{n}+\frac{1}{n})^{2}+g_{0}u_{n}, & \text{in }\Omega\\
    &u_{n}(x)=0, &\text{on }\partial\Omega \\
    &u_{n}(x,0)=\varphi_{0}(x),&\text{in }\Omega
    \end{aligned}\right.
\end{equation}

Define the following continuous lifted extension of $u_{n}$ on $\Omega^{*}=(\lambda,+\infty)$:
\begin{equation}\label{def_extension}
    \tilde{u}_{n}(x,t)\coloneqq\begin{cases}
\frac{1}{n}, & x\in(\lambda,a)\\
u_{n}+\frac{1}{n}, & x\in\Omega=(a,b)\\
\frac{1}{n}h(x), & x\in(b,+\infty)
\end{cases}
\end{equation}

with the tail $h(x)$ s.t. $h(b)=1$ decaying fast enough.

In particular,
\begin{equation}\label{def_h}
    h(x) = e^{-(x-b)}
\end{equation}
is a suitable choice, in which case the derivative of lifted extension $\tilde{u}_{n}$ is:
\begin{equation}\label{eq:dx_u_tilde}
    \partial_{x}\tilde{u}_{n}(x,t)=\begin{cases}
0, & x\in(\lambda,a)\\
\partial_{x}u_{n}, & x\in\Omega=(a,b)\\
-\frac{1}{n}h(x), & x\in(b,+\infty)
\end{cases}
\end{equation}

Note that $\tilde{u}_{n}$ is strictly positive, and it solves the following problem on the cut-off domain $\Omega=(a,b)$,
\begin{equation}\label{tilde_eq}
    \left\{ \begin{aligned}
    &\partial_{t}\tilde{u}_{n}=\partial_{x}\left(\gamma\tilde{u}_{n}\partial_{x}\tilde{u}_{n}\right)-\gamma\left(\partial_{x}\tilde{u}_{n}\right)^{2}-\frac{\partial_{x}\gamma }{2}\partial_{x}\tilde{u}_{n}^{2}+g_{0}\left(\tilde{u}_{n}-\frac{1}{n}\right), & \text{in }\Omega\\
    &\tilde{u}_{n}(x,t)=\frac{1}{n}, &\text{on }\partial\Omega \\
    &\tilde{u}_{n}(x,0)=\varphi_{0}(x)+\frac{1}{n},& \text{in }\Omega
    \end{aligned}\right.
\end{equation}

Obviously, $\tilde{u}_{n}$ and $\partial_{x}\tilde{u}_{n}$ converge to zero uniformly on $\Omega^{*}-\Omega$. In the next section, we are going to prove the existence of a limit function $u$ on $\Omega = (a,b)$ through regularity estimates uniform in $n$.

\section{Regularity estimates on approximate solutions}\label{sec_regularity}
In order to prove a.e. convergence of the gradient term in the next section, several regularity estimates are of necessity. In the work of Abdellaoui, Peral and Walias\cite{abdellaoui2015some}, the authors used existing results for an elliptic-parabolic problem with measure data. However for the problem we are dealing with, the nonlinear source term is not "measure data", thus it calls for a different approach.

To begin with, we introduce the following estimate on the spatial derivative of lifted extension $\tilde{u}_{n}$.

    According to Ladyzenskaja et al.\cite{ladyzhenskaya1968linear}, our assumption on the smoothness of data yields enough smoothness of solutions, even the third order derivative is H\"older continuous. And that allows us to study a parabolic equation for the first derivative $\partial_{x} u_{n}$, which renders the following regularity estimates.

% \begin{todiscuss}

%     \begin{prop}
% If $u_{n}$ are classical solutions to the problem(\ref{divergence}) for $t\in[0,T]$, then their gradient in space are uniformly bounded in $L^{2l}(\Omega)$,
% \begin{equation*}
%     \sup_{t\in[0,T]}\lVert\partial_{x}u_{n}\rVert_{L^{2l}(\Omega)} \leq C_{1}(\varphi_{0}, g_{0}, T, \Omega, l),\ l = 1, 2, \cdots
% \end{equation*}

% Consequently,
% \begin{equation*}
%     \lVert\partial_{x}u_{n}\rVert_{L^{2l}(Q_{T})} \leq C_{2}(\varphi_{0}, g_{0}, T, \Omega, l), \ l = 1, 2, \cdots
% \end{equation*}

% and
% \begin{equation}
%     \lVert u_{n}\rVert_{L^{2l}(0,T;W_{0}^{1,2l}(\Omega))} \leq C_{3}(\varphi_{0}, g_{0}, T, \Omega, l), \ l = 1, 2, \cdots
%     \label{L_H_bound}
% \end{equation}

% The readers should note that, by definition, $\partial_{x} u_{n} = \partial_{x} \left(u_{n} + \frac{1}{n} \right)= \partial_{x} \tilde{u}_{n}$.
% \label{prop_gradbound}
% \end{prop}

% \end{todiscuss}

\begin{prop}[\textbf{Uniform $W^{1,q}(\Omega^{*})$ estimate of the lifted extension}]\label{prop_grad_bound}
If $\tilde{u}_{n}$ are the lifted extension of classical solutions $u_{n}$ as defined in (\ref{def_extension}), then their gradient in space are uniformly bounded in $L^{q}(\Omega^{*})$,
\begin{equation*}
    \sup_{t\in[0,T]}\lVert\partial_{x}\tilde{u}_{n}\rVert_{L^{q}(\Omega^{*})}\leq C_{1}(\varphi_{0},g_{0},T,q),\ \forall q\in[1,+\infty)
\end{equation*}

Consequently,
\begin{equation}\label{Linfty_W1_bound}
    \lVert\tilde{u}_{n}\rVert_{L^{\infty}(0,T;W^{1,q}(\Omega^{*}))}\leq C_{2}(\varphi_{0},g_{0},T,q),\ \forall q\in[1,+\infty)
\end{equation}
\end{prop}

\begin{proof}

To begin with, consider the non-divergence form in (\ref{eq_Sn}),
\begin{equation}
    \partial_{t}u_{n}-\text{\ensuremath{\gamma}}\left(u_{n}+\frac{1}{n}\right)\partial_{x}^{2}u_{n}-g_{0}u_{n}=0,
    \label{nondiv}
\end{equation}

Note that $\partial_{x} \tilde{u}_{n} = \partial_{x} u_{n}$ in cut-off domain $\Omega=(a,b)$ as given in Equation(\ref{eq:dx_u_tilde}).

Let $z_{n} = \partial_x u_{n}$, then by Theorem \ref{thm_classical_sol}, $z_{n} \in \mathcal{H}^{1+\beta,(1+\beta)/2}(\overline{Q_{T}}) \cap \mathcal{H}^{2+\beta,(2+\beta)/2}(Q_{T})$. Taking first derivative on both sides of Equation(\ref{nondiv}), every term is still continuous. Therefore $z_{n}$ satisfies the following linear parabolic equation, if we regard $u_{n}$ as data.
\begin{equation*}
    \partial_{t}z_{n}-\partial_{x}\left(\gamma\left(u_{n}+\frac{1}{n}\right)\partial_{x}z_{n}\right)-g_{0}z_{n}-u_{n}\partial_{x}g_{0}=0
\end{equation*}

In addition, from Equation(\ref{nondiv}) and the boundary condition for $u_{n}$, it can be derived that
\begin{equation*}
    \partial_{x}z_{n}=\partial_{x}^{2}u_{n}=\frac{\partial_{t}u_{n}-g_{0}u_{n}}{\gamma\left(u_{n}+\frac{1}{n}\right)}=0 \text{ on }\partial \Omega.
\end{equation*}

To summarize, $z_{n}$ satisfies the following equations,
\begin{equation*}
    \left\{ \begin{aligned}
    &\partial_{t}z_{n}-\partial_{x}\left(\gamma\left(u_{n}+\frac{1}{n}\right)\partial_{x}z_{n}\right)-g_{0}z_{n}-u_{n}\partial_{x}g_{0}=0, &\text{in } \Omega\\
    &\partial_{x}z_{n}=0, &\text{on }\partial\Omega\\
    &z_{n}(x,0)=\partial_{x}\varphi_{0}(x)
    \end{aligned}\right.
\end{equation*}

Test the equation with $z_{n}^{2l+1}$ and perform integration by parts to obtain
\begin{equation}\label{zn_norm}
    \left(\partial_{t}z_{n},z_{n}^{2l+1}\right)_{\Omega}+\left(\gamma \left(u_{n}+\frac{1}{n}\right)\partial_{x}z_{n},(2l+1)z_{n}^{2l}\partial_{x}z_{n}\right)_{\Omega}=\left(g_{0}z_{n},z_{n}^{2l+1}\right)_{\Omega}+\left(u_{n}\partial_{x}g_{0},z_{n}^{2l+1}\right)_{\Omega}.
\end{equation}

Since the second term on the left hand side is non-negative, the following inequality holds,

\begin{equation*}
    \frac{1}{2l+2}\frac{d}{dt}\left(\int_{\Omega}z_{n}^{2l+2}\right)\leq\int_{\Omega}g_{0}z_{n}^{2l+2}+\int_{\Omega}u_{n}\partial_{x}g_{0}z_{n}^{2l+1}.
\end{equation*}

Again, use the maximum principle for $u_{n}$ in Theorem \ref{thm_max}, and by the assumption on the data $g_{0}$,
\begin{equation*} \frac{d}{dt}\left(\int_{\Omega}z_{n}^{2l+2}\right)\leq C_{1}(\varphi_{0}, g_{0},T)\int_{\Omega}z_{n}^{2l+2}+C_{2}(\varphi_{0}, g_{0},T)\int_{\Omega}|z_{n}|^{2l+1}
\end{equation*}

Let $I = \int_{\Omega}z_{n}^{2l+2}$, by H\"older's inequality, the above is equivalent to,
\begin{equation*}
    \frac{d}{dt}I\leq C_{1}I+C_{3}I^{\frac{2l+1}{2l+2}}
\end{equation*}

Apply Young's inequality on the second term to obtain
\begin{equation*}
    \frac{d}{dt}(I+C_{5})\leq C_{4}\left(I+C_{5}\right).
\end{equation*}

Therefore by Gr\"onwall's lemma we have
\begin{equation*}
    I\leq\left(I_{0}+C_{5}\right)\exp\left(C_{4}t\right)-C_{5}\leq\left(I_{0}+C_{5}\right)\exp\left(C_{4}T\right)-C_{5}.
\end{equation*}

% \begin{todiscuss}
%     Consequently,
% \begin{equation*}
%     \sup_{t\in[0,T]}\lVert\partial_{x}u_{n}\rVert_{L^{2l}(\Omega)} \leq C(\varphi_{0}, g_{0}, T, \Omega, l),\ l = 1, 2, \cdots
% \end{equation*}
% \end{todiscuss}

    Consequently,
\begin{equation*}
    \sup_{t\in[0,T]}\lVert\partial_{x}u_{n}\rVert_{L^{2l}(\Omega)} \leq C_{6}(\varphi_{0}, g_{0}, T, l),\ l = 1, 2, \cdots
\end{equation*}

Note that the cut-off domain $\Omega=(a,b)$ defined in Equation(\ref{eq:cut-off_domain}) is bounded, therefore
\begin{equation*}
    \sup_{t\in[0,T]}\lVert\partial_{x}u_{n}\rVert_{L^{q}(\Omega)} \leq C_{6}(\varphi_{0}, g_{0}, T, q),\ \forall q \in [1,+\infty)
\end{equation*}

By definition we have,
\begin{equation*}
   \begin{split}\lVert\partial_{x}\tilde{u}_{n}(\cdot,t)\rVert_{L^{q}(\Omega^{*})}^{q} & =\int_{\Omega^{*}}\lvert\partial_{x}\tilde{u}_{n}(\cdot,t)\rvert^{q}dx\\
 & =\int_{0}^{a}0\cdot dx+\int_{\Omega}\lvert\partial_{x}u_{n}(\cdot,t)\rvert^{q}dx+\int_{b}^{\infty}\left\vert\frac{1}{n}\partial_{x}h\right\vert ^{q}dx\\
 & =\lVert\partial_{x}u_{n}(\cdot,t)\rVert_{L^{q}(\Omega)}^{q}+\int_{b}^{\infty}\left\vert\frac{1}{n}\partial_{x}h\right\vert ^{q}dx,
\end{split}
\end{equation*}

hence
\begin{equation*}
    \sup_{t\in[0,T]}\lVert\partial_{x}\tilde{u}_{n}\rVert_{L^{q}(\Omega^{*})}\leq C_{7}(\varphi_{0},g_{0},T,q),\ \forall q \in [1,+\infty)
\end{equation*}

In addition, it can be easily verified that
\begin{equation*}
    \sup_{t\in[0,T]}\lVert\tilde{u}_{n}\rVert_{L^{q}(\Omega^{*})}\leq C_{8}(\varphi_{0},g_{0},T,q),\ \forall q \in [1,+\infty)
\end{equation*}

The above two bounds lead to the following uniform estimate,
\begin{equation*}
     \lVert\tilde{u}_{n}\rVert_{L^{\infty}(0,T;W^{1,q}(\Omega^{*}))}\leq C_{9}(\varphi_{0},g_{0},T,q),\ \forall q \in [1,+\infty)
\end{equation*}

\end{proof}

\begin{coro}\label{coro:dx_u_square}
    If $\tilde{u}_{n}$ is the lifted extension of classical solution $u_{n}$ as defined in (\ref{def_extension}), then the following inequality holds,
    \begin{equation*}
        \lVert \partial_{x}\tilde{u}_{n}^{2}\rVert_{L^{2}(Q^{*}_{T})} \leq C(\varphi_{0}, g_{0}, T),
    \end{equation*}
    where the constant $C$ is independent of $n$.
\end{coro}

\begin{proof}
    The conclusion is a straight forward consequence of the maximum principle in Theorem \ref{thm_max} combined with Proposition \ref{prop_grad_bound}. 
\end{proof}

% \begin{todiscuss}
%   \begin{prop}
%     The sequence $\tilde{u}_{n}^{2}$ is uniformly bounded in $W^{1,2}(Q_{T})$.
%     \label{prop_bound_W12}
% \end{prop}

% \end{todiscuss}

\begin{prop}[\textbf{Uniform $W^{1,2}(Q^{*}_{T})$ estimate of $\tilde{u}^{2}_{n}$}]\label{prop_bound_W12}
    The sequence $\tilde{u}_{n}^{2}$ is uniformly bounded in $W^{1,2}(Q^{*}_{T})$.
\end{prop}

\begin{proof}
To begin with, we estimate the $L^{2}(Q^{*}_{T})$ norm of the temporal derivative.

Take Equation(\ref{tilde_eq}) and test it with $\partial_{t}\tilde{u}_{n}^{2}$. Integrate by parts in $x$, and the trace integral vanishes because $\partial_{t}\tilde{u}_{n}^{2}=0$ on $\partial \Omega$, hence
\begin{equation*}
    \left(\partial_{t}\tilde{u}_{n},\partial_{t}\tilde{u}_{n}^{2}\right)_{Q_{T}}=-\left(\frac{\gamma}{2}\partial_{x}\tilde{u}_{n}^{2},\partial_{t}\partial_{x}\tilde{u}_{n}^{2}\right)_{Q_{T}}+\left(-\gamma\left(\partial_{x}\tilde{u}_{n}\right)^{2}-\frac{\partial_{x}\gamma }{2}\partial_{x}\tilde{u}_{n}^{2}+g_{0}\left(\tilde{u}_{n}-\frac{1}{n}\right),\partial_{t}\tilde{u}_{n}^{2}\right)_{Q_{T}}.
\end{equation*}

Applying the fundamental theorem of calculus with respect to $t\in (0, T)$ on the first term of the right hand side,
\begin{equation*}
    \left(\partial_{t}\tilde{u}_{n},\partial_{t}\tilde{u}_{n}^{2}\right)_{Q_{T}}=-\left(\frac{\gamma}{4},\left(\partial_{x}\tilde{u}_{n}^{2}(T)\right)^{2}\right)_{\Omega}+\left(\frac{\gamma}{4},\left(\partial_{x}\tilde{u}_{n}^{2}(0)\right)^{2}\right)_{\Omega}+\left(-\gamma\left(\partial_{x}\tilde{u}_{n}\right)^{2}-\frac{\partial_{x}\gamma }{2}\partial_{x}\tilde{u}_{n}^{2}+g_{0}\left(\tilde{u}_{n}-\frac{1}{n}\right),\partial_{t}\tilde{u}_{n}^{2}\right)_{Q_{T}}
\end{equation*}

The goal is to bound $\left(\partial_{t}\tilde{u}^{2}_{n},\partial_{t}\tilde{u}_{n}^{2}\right)_{Q_{T}}$, however the left hand side is $\left(\partial_{t}\tilde{u}_{n},\partial_{t}\tilde{u}_{n}^{2}\right)_{Q_{T}}$.

Note that by maximum principle, there exists some constant $C_{1}$ such that $\tilde{u}_{n} \in (\frac{1}{2n}, \frac{C_{1}}{2})$, therefore,
\begin{equation*}
    \left(\partial_{t}\tilde{u}_{n}^{2}\right)^{2}=4\tilde{u}_{n}^{2}\left(\partial_{t}\tilde{u}_{n}\right)^{2}\leq C_{1}\cdot2\tilde{u}_{n}\left(\partial_{t}\tilde{u}_{n}\right)^{2} = C_{1} \left(\partial_{t}\tilde{u}_{n}\right)\left(\partial_{t}\tilde{u}_{n}^{2}\right).
\end{equation*}
Integrate both sides on $Q_{T}$,

\begin{equation*}
\begin{split}\lVert\partial_{t}\tilde{u}_{n}^{2}\rVert_{L^{2}(Q_{T})}^{2}\leq & C_{1}\left(\partial_{t}\tilde{u}_{n},\partial_{t}\tilde{u}_{n}^{2}\right)_{Q_{T}}\\
= & C_{1}\left(-\left(\frac{\gamma}{4},\left(\partial_{x}\tilde{u}_{n}^{2}(T)\right)^{2}\right)_{\Omega}+\left(\frac{\gamma}{4},\left(\partial_{x}\tilde{u}_{n}^{2}(0)\right)^{2}\right)_{\Omega}\right)\\
 & +C_{1}\left(-\gamma\left(\partial_{x}\tilde{u}_{n}\right)^{2}-\frac{\partial_{x}\gamma }{2}\partial_{x}\tilde{u}_{n}^{2}+g_{0}\left(\tilde{u}_{n}-\frac{1}{n}\right),\partial_{t}\tilde{u}_{n}^{2}\right)_{Q_{T}}\\
\leq & C_{1}\left(-\left(\frac{\gamma}{4},\left(\partial_{x}\tilde{u}_{n}^{2}(T)\right)^{2}\right)_{\Omega}+\left(\frac{\gamma}{4},\left(\partial_{x}\tilde{u}_{n}^{2}(0)\right)^{2}\right)_{\Omega}\right)\\
 & +C_{1}\lVert-\gamma\left(\partial_{x}\tilde{u}_{n}\right)^{2}-\frac{\partial_{x}\gamma }{2}\partial_{x}\tilde{u}_{n}^{2}+g_{0}\left(\tilde{u}_{n}-\frac{1}{n}\right)\rVert_{L^{2}(Q_{T})}\cdot\lVert\partial_{t}\tilde{u}_{n}^{2}\rVert_{L^{2}(Q_{T})},
\end{split}
\end{equation*}

in which we used H\"older's inequality. Then use Young's inequality to bound the last term in the inequality above,

\begin{equation*}
    \begin{split} & \lVert-\gamma\left(\partial_{x}\tilde{u}_{n}\right)^{2}-\frac{\partial_{x}\gamma }{2}\partial_{x}\tilde{u}_{n}^{2}+g_{0}\left(\tilde{u}_{n}-\frac{1}{n}\right)\rVert_{L^{2}(Q_{T})}\cdot\lVert\partial_{t}\tilde{u}_{n}^{2}\rVert_{L^{2}(Q_{T})}\\
\leq & \frac{C_{1}}{2}\lVert-\gamma\left(\partial_{x}\tilde{u}_{n}\right)^{2}-\frac{\partial_{x}\gamma }{2}\partial_{x}\tilde{u}_{n}^{2}+g_{0}\left(\tilde{u}_{n}-\frac{1}{n}\right)\rVert_{L^{2}(Q_{T})}^{2}+\frac{1}{2C_{1}}\lVert\partial_{t}\tilde{u}_{n}^{2}\rVert_{L^{2}(Q_{T})}^{2}.
\end{split}
\end{equation*}

It follows that
\begin{equation*}
    \begin{split}
        \lVert\partial_{t}\tilde{u}_{n}^{2}\rVert_{L^{2}(Q_{T}^{*})}^{2}=\lVert\partial_{t}\tilde{u}_{n}^{2}\rVert_{L^{2}(Q_{T})}^{2}\leq& 2C_{1}\left(-\left(\frac{\gamma}{4},\left(\partial_{x}\tilde{u}_{n}^{2}(T)\right)^{2}\right)_{\Omega}+\left(\frac{\gamma}{4},\left(\partial_{x}\tilde{u}_{n}^{2}(0)\right)^{2}\right)_{\Omega}\right)\\
        &+C_{1}^{2}\left\lVert -\gamma\left(\partial_{x}\tilde{u}_{n}\right)^{2}-\frac{\partial_{x}\gamma }{2}\partial_{x}\tilde{u}_{n}^{2}+g_{0}\left(\tilde{u}_{n}-\frac{1}{n}\right)\right\rVert _{L^{2}(Q_{T})}^{2}
    \end{split}
\end{equation*}

By Theorem \ref{thm_max} and Proposition \ref{prop_grad_bound}, the right hand side is uniformly bounded, hence $\lVert\partial_{t}\tilde{u}_{n}^{2}\rVert_{L^{2}(Q_{T})}$ is uniformly bounded. 

Meanwhile by Corollary \ref{coro:dx_u_square}, the spatial derivative $\lVert\partial_{x}\tilde{u}_{n}^{2}\rVert_{L^{2}(Q^{*}_{T})}$ is also uniformly bounded, thus the result follows.
\end{proof}

\bigskip
The following lemma shows that convergence a.e. combined with uniform boundedness implies strong convergence.
\begin{lemma}[\textbf{strong convergence from a.e. convergence}]
\label{lemma_ae_imply_strong}
    For a sequence $v_{n}$ that is uniformly bounded in $L^{4}(Q_{T})$, if $v_{n}$ converges to $v\in L^{4}(Q_{T})$ almost everywhere in $Q_{T}$, then $v_{n}$ converges to $v$ strongly in $L^{2}(Q_{T})$. 
\end{lemma}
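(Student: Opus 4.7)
The plan is to combine the pointwise a.e.\ convergence with the uniform $L^4$ bound via a standard equi-integrability/Vitali argument. The key observation is that $Q_T$ has finite measure and that $L^4$-boundedness of $v_n - v$ controls the $L^2$-mass of $|v_n-v|^2$ on small sets.

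First, I would note that the limit $v$ itself lies in $L^4(Q_T)$ (either by assumption as stated, or by Fatou's lemma applied to $|v_n|^4$ together with a.e.\ convergence), so the difference $w_n := v_n - v$ is uniformly bounded in $L^4(Q_T)$, say $\lVert w_n\rVert_{L^4(Q_T)}\le M$. Next, I would observe that $|w_n|^2\to 0$ almost everywhere on $Q_T$.

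The main step is to establish equi-integrability of the sequence $|w_n|^2$. For any measurable $E\subset Q_T$, Hölder's inequality with exponents $2$ and $2$ gives
\begin{equation*}
\int_E |w_n|^2\,dx\,dt \;\le\; \left(\int_E |w_n|^4\,dx\,dt\right)^{1/2}|E|^{1/2}\;\le\; M^2\,|E|^{1/2},
\end{equation*}
which can be made uniformly small by taking $|E|$ small, independently of $n$. Since $|Q_T|<\infty$, this is exactly the equi-integrability hypothesis of Vitali's convergence theorem. Applying Vitali to $|w_n|^2$ then yields $\int_{Q_T}|v_n-v|^2\,dx\,dt\to 0$, i.e.\ $v_n\to v$ strongly in $L^2(Q_T)$.

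I do not expect any serious obstacle here; the only minor care needed is to justify $v\in L^4(Q_T)$ if it were not already in the hypothesis (Fatou handles this) and to ensure $Q_T$ has finite Lebesgue measure, which holds because $\Omega=(x_a,x_b)$ is bounded and $T<\infty$. An equivalent packaging via Egorov's theorem would also work: split $Q_T$ into a set of small measure and its complement on which $v_n\to v$ uniformly, then control the small-measure piece by the same Hölder estimate above.
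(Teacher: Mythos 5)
Your proof is correct and is essentially the same as the paper's: the core step in both is the H\"older estimate $\int_E |v_n-v|^2 \le \lVert v_n-v\rVert_{L^4}^2 |E|^{1/2}$ controlling the contribution of small sets via the uniform $L^4$ bound. The paper packages this through Egorov's theorem (splitting $Q_T$ into a small exceptional set and a set of uniform convergence) rather than Vitali, which you yourself note is an equivalent formulation.
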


\begin{proof}
    By Egorov's theorem, for any $\epsilon>0$, there exists a measurable set $S_{\epsilon} \subset Q_{T}$ such that $|S_{\epsilon}|\leq \epsilon$ and $v_{n}\rightarrow v$ uniformly in $Q_{T}\backslash S_{\epsilon}$. Therefore,
    \begin{equation*}
        \begin{split}
            \int_{Q_{T}}|v_{n}-v|^{2}
            =&\int_{S_{\epsilon}}|v_{n}-v|^{2}+\int_{Q_{T}\backslash S_{\epsilon}}|v_{n}-v|^{2}\\
            = & \int_{Q_{T}}|v_{n}-v|^{2}\chi_{S_{\epsilon}}+\int_{Q_{T}}|v_{n}-v|^{2}\chi^{2}_{Q_{T}\backslash S_{\epsilon}}
        \end{split}
    \end{equation*}

    Apply H\"older's inequality on both terms,
    \begin{equation*}
        \begin{split}
            \int_{Q_{T}}|v_{n}-v|^{2}
            \leq&\lVert|v_{n}-v|^{2}\rVert_{L^{2}(Q_{T})}\cdot\lVert\chi_{S_{\epsilon}}\rVert_{L^{2}(Q_{T})}+\left(\sup_{(x,t)\in Q_{T}\backslash S_{\epsilon}}|v_{n}-v|^{2}\right)\cdot\lVert\chi_{Q_{T}\backslash S_{\epsilon}}\rVert_{L^{1}(Q_{T})}\\
            \leq & \lVert v_{n}-v\rVert_{L^{4}(Q_{T})}^{2}\cdot\epsilon+C_{1}\left(\sup_{(x,t)\in Q_{T}\backslash S_{\epsilon}}|v_{n}-v|^{2}\right)
        \end{split}
    \end{equation*}

   Since $v_{n} \rightarrow v$ uniformly in $Q_{T}\backslash S_{\epsilon}$, taking the limit on both sides,
    \begin{equation*}
        \limsup_{n\rightarrow\infty}\int_{Q_{T}}|v_{n}-v|^{2}\leq\lVert v_{n}-v\rVert_{L^{4}(Q_{T})}^{2}\cdot\epsilon
    \end{equation*}

   As $\epsilon$ is arbitrary and $\lVert v_{n}-v\rVert_{L^{4}(Q_{T})}$ is uniformly bounded, we conclude that $v_{n}$ converges to $v$ strongly in $L^{2}(Q_{T})$.
    
\end{proof}

% \begin{todiscuss}
%     \begin{coro}
% For any given $l\in \mathbb{N}^{+}$, up to a subsequence, $u_{n}$ converge to $u$ weakly in $L^{2l}(0,T;W_{0}^{1,2l}(\Omega))$.
% \label{coro_weak_conv_subseq}

% \end{coro}

% \begin{proof}
%     With inequality(\ref{L_H_bound}), use Banach-Alaoglu Theorem. 
% \end{proof}

% \begin{coro}
% Up to a subsequence, $u_{n}$ converge to $u$ strongly in $L^{q}(Q_{T})$, for any $q<\infty$.
% \label{coro_strong_conv_subseq}
% \end{coro}

% \begin{proof}
%     By compactness, Proposition \ref{prop_bound_W12} implies that, for any $q<\infty$, up to a subsequence, $\tilde{u}_{n}^{2}$ converge to some function $z$ strongly in $L^{q}(Q_{T})$. Therefore by Lemma \ref{lemma_ae_imply_strong}, $\tilde{u}_{n}$ converge to $u=\sqrt{z}$ strongly in $L^{q}(Q_{T})$, for any $q<\infty$.
% \end{proof}

% Combining Corollary \ref{coro_weak_conv_subseq} and Corollary \ref{coro_strong_conv_subseq}, by taking subsequence of a subsequence, we obtain a limit $u \in L^{2l}(0,T;W_{0}^{1,2l}(\Omega))$ such that $u_{n}$ converge to $u$ weakly in $L^{2l}(0,T;W_{0}^{1,2l}(\Omega))$ and strongly in $L^{q}(Q_{T})$.  

% \end{todiscuss}

\begin{coro}\label{coro_limit_exist}
    For any given $q\in \mathbb{N}^{+}$, there exists a function $u \in L^{q}(0,T;W_{0}^{1,q}(\Omega^{*}))$ such that up to a subsequence,
    \begin{enumerate}
        \item $\tilde{u}_{n}$ converge to $u$ strongly in $L^{q}(Q_{T})$, for any $q \in \mathbb{N}^{+}$;
        \item $\tilde{u}_{n}$ converge to $u$ weakly in $\ensuremath{L^{q}(0,T;W_{0}^{1,q}(\Omega^{*}))}$, for any $q \in \mathbb{N}^{+}$;
        \item $\partial_{x}\tilde{u}_{n}$ converge to $\partial_{x}u$ weakly in
        $L^{2}(Q_{T})$.
    \end{enumerate}
\end{coro}

\begin{proof}
    By compactness, Proposition \ref{prop_bound_W12} implies that, for any $q<\infty$, up to a subsequence, $\tilde{u}_{n}^{2}$ converge to some function $y$ strongly in $L^{q}(Q_{T})$. Up to a subsequence of that subsequence, $\tilde{u}_{n}$ converge to $\sqrt{y}$ almost everywhere in $Q_{T}$,   Therefore by Lemma \ref{lemma_ae_imply_strong}, $\tilde{u}_{n}$ converge to $u=\sqrt{y}$ strongly in $L^{q}(Q_{T})$, for any $q<\infty$.

    With inequality(\ref{Linfty_W1_bound}), use Banach-Alaoglu Theorem, there exists a function $u \in L^{q}(0,T;W_{0}^{1,q}(\Omega^{*}))$ such that up to a subsequence, $\tilde{u}_{n}$ converge to $u$ weakly in $\ensuremath{L^{q}(0,T;W_{0}^{1,q}(\Omega^{*}))}$.

    Since $\partial_{x}\tilde{u}_{n}$ is uniformly bounded in $L^{2}(Q_{T})$, by Banach-Alaoglu Theorem, there exists a function $z \in L^{2}(Q_{T})$ such that up to a subsequence, $\partial_{x}\tilde{u}_{n}$ converges to $z$ weakly in $L^{2}(Q_{T})$. Since $\tilde{u}_{n}\rightarrow u$ a.e. in $Q_{T}$, it follows that $z=\partial_{x} u$ in the sense of distributions.

    By taking subsequence of a subsequence, there must be a function $u$ satisfying all the above conditions simultaneously.
\end{proof}

\bigskip
Now it remains to show that such a function $u$ is indeed a weak solution to the problem. That requires $\partial_{x} \tilde{u}_{n}$ converging to $\partial_{x} u$ almost everywhere in $Q_{T}$, which will be elaborated in Theorem \ref{thm_converge_ae} of the next section. The proof of Theorem \ref{thm_converge_ae} will need a control of $\partial_{x}\left(\tilde{u}_{n}^{1-\theta}\right)$, therefore we present an auxiliary estimate as follows. 

\begin{prop}[\textbf{Uniform weighted $L^{1}$ bound of $\partial_{x}\left(\tilde{u}_{n}^{1-\theta}\right)$}]
     Let $\psi\in C_{0}^{\infty}(Q_{T})$ be s.t. $\psi\geq0$ in $Q_{T}$, then the sequence $\gamma \psi\tilde{u}_{n}^{-\theta}|\partial_{x}\tilde{u}_{n}|$ is uniformly bounded in $L^{1}(Q_{T})$ for any $\theta\in(0,1/2)$.
    \label{prop_power}
\end{prop}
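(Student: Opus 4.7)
The plan is to reduce the claimed $L^1$ bound to a weighted $L^2$ bound via Cauchy--Schwarz, and then obtain that $L^2$ bound by testing the equation against a negative-power test function tuned to produce exactly the desired integrand. Concretely, writing
\begin{equation*}
\int_{Q_T} x^2 \psi\, \tilde u_n^{-\theta}|\partial_x \tilde u_n|\,dxdt \;=\; \int_{Q_T}(x\sqrt{\psi})\cdot(x\sqrt{\psi}\,\tilde u_n^{-\theta}|\partial_x \tilde u_n|)\,dxdt,
\end{equation*}
Cauchy--Schwarz immediately yields
\begin{equation*}
\int_{Q_T} x^2\psi\,\tilde u_n^{-\theta}|\partial_x \tilde u_n|\,dxdt \;\le\; \left(\int_{Q_T} x^2\psi\,dxdt\right)^{1/2}\left(\int_{Q_T} x^2\psi\,\tilde u_n^{-2\theta}(\partial_x \tilde u_n)^2\,dxdt\right)^{1/2}.
\end{equation*}
The first factor is a fixed constant depending only on $\psi$ and $\Omega$, so the entire task is to prove that the second factor is uniformly bounded in $n$.

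To obtain this weighted $L^2$ estimate, I would multiply Equation~(\ref{tilde_eq}) by the test function $\phi_n := \psi\,\tilde u_n^{-2\theta}$ and integrate over $Q_T$. Since $\tilde u_n \ge 1/n > 0$ is a smooth classical solution by Theorem~\ref{thm_classical_sol} and $\psi \in C_0^\infty(Q_T)$, $\phi_n$ is a smooth compactly-supported test function and all integrations by parts are justified with boundary contributions vanishing. The time-derivative term, using $\partial_t \tilde u_n \cdot \tilde u_n^{-2\theta} = (1-2\theta)^{-1}\partial_t \tilde u_n^{1-2\theta}$ (valid because $1-2\theta>0$), contributes $-\tfrac{1}{1-2\theta}\int_{Q_T}\tilde u_n^{1-2\theta}\partial_t\psi$. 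The crucial cancellation happens next: integration by parts in $x$ on $\partial_x(x^2\tilde u_n\partial_x\tilde u_n)\cdot\phi_n$ produces the ``good'' term $+2\theta\int x^2\psi\,\tilde u_n^{-2\theta}(\partial_x\tilde u_n)^2$, while the nonlinear source $-x^2(\partial_x\tilde u_n)^2$ contributes $-\int x^2\psi\,\tilde u_n^{-2\theta}(\partial_x\tilde u_n)^2$. These combine into a net coefficient $-(1-2\theta)<0$ on the target integral, so it can be isolated on one side of the identity.

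The remaining right-hand-side terms each involve only non-negative powers of $\tilde u_n$ paired either with derivatives of the fixed cut-off $\psi$ or with a single factor $\partial_x\tilde u_n$: namely $\int \tilde u_n^{1-2\theta}\partial_t\psi$, $\int x^2\tilde u_n^{1-2\theta}(\partial_x\tilde u_n)\partial_x\psi$, $\int \tilde u_n^{2-2\theta}(\psi + x\partial_x\psi)$ (obtained from $-x\partial_x\tilde u_n^2\cdot\phi_n$ after one more integration by parts), and the source contribution $\int g_0\psi\,\tilde u_n^{-2\theta}(\tilde u_n - 1/n)$. The first three are uniformly bounded by the maximum principle (Theorem~\ref{thm_max}) together with Proposition~\ref{prop_gradbound} at $l=1$, which provides $\partial_x\tilde u_n = \partial_x u_n$ uniformly in $L^2(Q_T)$. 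The only point I expect to require a moment of care, and which is the main ``obstacle'', is the last source integral, since $\tilde u_n^{-2\theta}$ is \emph{not} uniformly bounded in $n$; this is rescued by the elementary inequality $\tilde u_n^{-2\theta}(\tilde u_n - 1/n) \le \tilde u_n^{1-2\theta}$, which holds precisely because $\tilde u_n \ge 1/n$, reducing the problematic source to a bounded quantity via the maximum principle. Collecting everything gives $(1-2\theta)\int_{Q_T} x^2\psi\,\tilde u_n^{-2\theta}(\partial_x\tilde u_n)^2 \le C$ uniformly in $n$, and the Cauchy--Schwarz reduction of the first paragraph then yields the claimed $L^1$ bound.
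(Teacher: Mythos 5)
Your proposal is correct and follows essentially the same route as the paper: the paper also tests Equation~(\ref{tilde_eq}) with $\psi\tilde{u}_{n}^{-\delta}$ for $\delta=2\theta\in(0,1)$, isolates the coefficient $(1-\delta)$ on $\left(x^{2}\psi,\tilde{u}_{n}^{-\delta}(\partial_{x}\tilde{u}_{n})^{2}\right)_{Q_{T}}$ by combining the integration-by-parts term with the quadratic source, controls the remaining terms via the maximum principle and the bound $\frac{1}{n}\tilde{u}_{n}^{-\delta}\leq 1$, and then applies the same Cauchy--Schwarz reduction to pass from the weighted $L^{2}$ estimate to the $L^{1}$ claim. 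The only (immaterial) differences are that you bound the $\partial_{x}\psi$ cross term using the uniform $L^{2}$ gradient bound of Proposition~\ref{prop_gradbound} rather than a second integration by parts, and you absorb the $g_{0}$ source via $\tilde{u}_{n}^{-2\theta}(\tilde{u}_{n}-\tfrac{1}{n})\leq\tilde{u}_{n}^{1-2\theta}$ instead of splitting it.
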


\begin{proof}
Let $\psi\in C_{0}^{\infty}(Q_{T})$ be s.t. $\psi\geq0$ in $Q_{T}$. Test Equation(\ref{tilde_eq}) with $\psi\tilde{u}_{n}^{-\delta}$, where $\delta\in(0,1)$. Since $\psi = 0$ on $\partial Q_{T}$, integrating by parts on $x$, the trace integral vanishes, it follows that,
\begin{equation*}
    \begin{split}
        &\left(\partial_{t}\tilde{u}_{n},\psi\tilde{u}_{n}^{-\delta}\right)_{Q_{T}}+\left(\gamma\tilde{u}_{n}\partial_{x}\tilde{u}_{n},\partial_{x}\left(\psi\tilde{u}_{n}^{-\delta}\right)\right)_{Q_{T}}\\
        =&\left(\partial_{t}\tilde{u}_{n},\psi\tilde{u}_{n}^{-\delta}\right)_{Q_{T}}+\left(\gamma\tilde{u}_{n}\partial_{x}\tilde{u}_{n},\tilde{u}_{n}^{-\delta}\left(\partial_{x}\psi\right)\right)_{Q_{T}}+\left(\gamma\tilde{u}_{n}\partial_{x}\tilde{u}_{n},\psi\left(\partial_{x}\tilde{u}_{n}^{-\delta}\right)\right)_{Q_{T}}\\
        =&\left(-\gamma\left(\partial_{x}\tilde{u}_{n}\right)^{2},\psi\tilde{u}_{n}^{-\delta}\right)_{Q_{T}}+\left(-\frac{\partial_{x}\gamma }{2}\partial_{x}\tilde{u}_{n}^{2},\psi\tilde{u}_{n}^{-\delta}\right)_{Q_{T}}+\left(g_{0}\left(\tilde{u}_{n}-\frac{1}{n}\right),\psi\tilde{u}_{n}^{-\delta}\right)_{Q_{T}}.
    \end{split}
\end{equation*}

Thus, simplifying and rearranging each inner product term in the equation above to obtain 
\begin{equation*}
    \begin{split}
        &\left(\tilde{u}_{n}^{-\delta}\partial_{t}\tilde{u}_{n},\psi\right)_{Q_{T}}+\left(\tilde{u}_{n}^{1-\delta}\partial_{x}\tilde{u}_{n},\gamma\partial_{x}\psi\right)_{Q_{T}}+(-\delta)\left(\tilde{u}_{n}^{-\delta}\left(\partial_{x}\tilde{u}_{n}\right)^{2},\gamma\psi\right)_{Q_{T}}\\
        =&-\left(\tilde{u}_{n}^{-\delta}\left(\partial_{x}\tilde{u}_{n}\right)^{2},\gamma\psi\right)_{Q_{T}}-\left(\tilde{u}_{n}^{1-\delta}\partial_{x}\tilde{u}_{n},\psi\partial_{x}\gamma\right)_{Q_{T}}+\left(\tilde{u}_{n}^{-\delta}\left(\tilde{u}_{n}-\frac{1}{n}\right),\psi g_{0}\right)_{Q_{T}}.
    \end{split}
\end{equation*}

Next, collecting the third term on the left hand side and the first term on the right hand side, we have

\begin{equation}\label{prop4-3}
    \begin{split}
        &\left(1-\delta\right)\left(\gamma\psi,\tilde{u}_{n}^{-\delta}\left(\partial_{x}\tilde{u}_{n}\right)^{2}\right)_{Q_{T}}\\
        =&-\frac{1}{1-\delta}\left(\partial_{t}\tilde{u}_{n}^{1-\delta},\psi\right)_{Q_{T}}-\frac{1}{2-\delta}\left(\partial_{x}\left(\gamma\psi\right),\partial_{x}\tilde{u}_{n}^{2-\delta}\right)_{Q_{T}}+\left(\psi g_{0},\tilde{u}_{n}^{1-\delta}\right)_{Q_{T}}-\left(\psi g_{0},\frac{1}{n}\tilde{u}_{n}^{-\delta}\right)_{Q_{T}}.
    \end{split}
\end{equation}

The first three terms on the right hand side are apparently bounded. Indeed $\tilde{u}_{n}$ satisfies maximum principle and,
\begin{equation*}
    \begin{split}
        \left(\partial_{t}\tilde{u}_{n}^{1-\delta},\psi\right)_{Q_{T}}&=-\left(\tilde{u}_{n}^{1-\delta},\partial_{t}\psi\right)_{Q_{T}}\\
        \left(\partial_{x}\left(\gamma\psi\right),\partial_{x}\tilde{u}_{n}^{2-\delta}\right)_{Q_{T}}&=-\left(\partial_{x}^{2}\left(\gamma\psi\right),\tilde{u}_{n}^{2-\delta}\right)_{Q_{T}}.
    \end{split}
\end{equation*}

In addition,  since $\tilde{u}_{n} = u_{n} + \frac{1}{n} \geq \frac{1}{n}$, the fourth term \eqref{prop4-3} to be estimated   
\begin{equation*}
    \frac{1}{n}\tilde{u}_{n}^{-\delta}\leq n^{\delta-1}=\frac{1}{n^{1-\delta}} \leq 1
\end{equation*}

Therefore,  the left hand side of \eqref{prop4-3}
becomes  uniformly bounded,
\begin{equation}\label{logbound}
    \left(\gamma\psi,\tilde{u}_{n}^{-\delta}\left(\partial_{x}\tilde{u}_{n}\right)^{2}\right)_{Q_{T}}\leq C(\varphi_{0},g_{0},T,\psi,\delta)
 \end{equation}

Finally, by H\"older's inequality, the $L^{1}$ norm of the sequence $\gamma \psi\tilde{u}_{n}^{-\delta/2}|\partial_{x}\tilde{u}_{n}|$ can be bounded as follows,
\begin{equation*}
\begin{split}\lVert\gamma\psi\tilde{u}_{n}^{-\delta/2}\lvert\partial_{x}\tilde{u}_{n}\rvert\rVert_{L^{1}(Q_{T})}= & \left(\left(\gamma\psi\right)^{1/2},\left(\gamma\psi\tilde{u}_{n}^{-\delta}\left(\partial_{x}\tilde{u}_{n}\right)^{2}\right)^{1/2}\right)_{Q_{T}}\\
\leq & \lVert\left(\gamma\psi\right)^{1/2}\rVert_{L^{2}(Q_{T})}\cdot\lVert\left(\gamma\psi\tilde{u}_{n}^{-\delta}\left(\partial_{x}\tilde{u}_{n}\right)^{2}\right)^{1/2}\rVert_{L^{2}(Q_{T})}\\
= & \lVert\left(\gamma\psi\right)^{1/2}\rVert_{L^{2}(Q_{T})}\cdot\sqrt{\left(\gamma\psi,\tilde{u}_{n}^{-\delta}\left(\partial_{x}\tilde{u}_{n}\right)^{2}\right)_{Q_{T}}}.
\end{split}
\end{equation*}

And so, by inequality (\ref{logbound}), the sequence $\gamma \psi\tilde{u}_{n}^{-\theta}|\partial_{x}\tilde{u}_{n}|$ is uniformly bounded in $L^{1}(Q_{T})$ for any $\theta = \frac{\delta}{2} \in(0,\frac{1}{2})$.

\end{proof}

\section{Convergence results}\label{sec_convergence}
The aim of this section is to prove that the sequence $\partial_{x}u_{n}$ converge to $\partial_{x}u$ a.e. in $Q_{T}$, where we have adopted the techniques in the work of Abdellaoui, Peral and Walias\cite{abdellaoui2015some}. The roadmap is as follows:
\begin{enumerate}
    \item Using Proposition \ref{prop_grad_bound} and Proposition \ref{prop_bound_W12} to prove Lemma \ref{lemma_sqr_convergence}.
    \item Theorem \ref{thm_square_converge} is a simple corollary of Lemma \ref{lemma_sqr_convergence}.
    \item Combining Theorem \ref{thm_square_converge} and Proposition \ref{prop_power} to prove Theorem \ref{thm_converge_ae}.
\end{enumerate}

\begin{lemma}
    If $\tilde{u}_{n}$ is the solution to Equation(\ref{tilde_eq}), then for any $ s \in (0,1)$
    \begin{equation*}
         \lim_{n\rightarrow\infty}\int_{Q_{T}}\left[\gamma\tilde{u}_{n}\left(\partial_{x}\left(\tilde{u}_{n}-u\right)\right)^{2}\right]^{s} = 0
    \end{equation*}
    \label{lemma_sqr_convergence}
\end{lemma}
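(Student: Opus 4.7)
The plan is to reduce the claim via H\"older's inequality to convergence in $L^{1}$, then to expand the square and identify the limits of the three resulting terms. Since $s\in(0,1)$ and $|Q_{T}|<\infty$, H\"older with exponents $1/s$ and $1/(1-s)$ gives
\[
\int_{Q_{T}}\bigl[x^{2}\tilde u_{n}\bigl(\partial_{x}(\tilde u_{n}-u)\bigr)^{2}\bigr]^{s}\,dx\,dt \;\le\; |Q_{T}|^{1-s}\Bigl(\int_{Q_{T}} x^{2}\tilde u_{n}\bigl(\partial_{x}(\tilde u_{n}-u)\bigr)^{2}\,dx\,dt\Bigr)^{s},
\]
so it suffices to prove $J_{n}:=\int_{Q_{T}} x^{2}\tilde u_{n}(\partial_{x}(\tilde u_{n}-u))^{2}\to 0$. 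Using $\partial_{x}\tilde u_{n}^{2}=2\tilde u_{n}\partial_{x}\tilde u_{n}$, expand $J_{n}=A_{n}-2B_{n}+C_{n}$ with
\[
A_{n}=\int_{Q_{T}} x^{2}\tilde u_{n}(\partial_{x}\tilde u_{n})^{2},\qquad B_{n}=\tfrac{1}{2}\!\int_{Q_{T}} x^{2}\,\partial_{x}\tilde u_{n}^{2}\,\partial_{x}u,\qquad C_{n}=\int_{Q_{T}} x^{2}\tilde u_{n}(\partial_{x}u)^{2}.
\]

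The two cross terms are straightforward. Proposition~\ref{prop_bound_W12} yields $\partial_{x}\tilde u_{n}^{2}\rightharpoonup \partial_{x}u^{2}$ weakly in $L^{2}(Q_{T})$, and since $u\in L^{2l}(0,T;W_{0}^{1,2l}(\Omega))$ from Corollary~\ref{coro_weak_conv_subseq} we have $x^{2}\partial_{x}u\in L^{2}(Q_{T})$; weak--strong pairing then gives $B_{n}\to \int x^{2}u(\partial_{x}u)^{2}$. The strong $L^{q}$ convergence $\tilde u_{n}\to u$ from Corollary~\ref{coro_strong_conv_subseq}, combined with the maximum-principle $L^{\infty}$ bound and $(\partial_{x}u)^{2}\in L^{1}(Q_{T})$, delivers $C_{n}\to \int x^{2}u(\partial_{x}u)^{2}$ by dominated convergence. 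Hence $-2B_{n}+C_{n}\to -\int_{Q_{T}} x^{2}u(\partial_{x}u)^{2}$.

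The main obstacle is identifying $\lim A_{n}$, for which weak convergence of $\partial_{x}\tilde u_{n}$ is insufficient. My approach is to test Equation~(\ref{tilde_eq}) against $u_{n}=\tilde u_{n}-1/n$ (admissible since $u_{n}=0$ on $\partial\Omega$), integrate by parts in $x$ on the divergence-form term, and rearrange to isolate $A_{n}$:
\[
2A_{n}=-\tfrac{1}{2}\!\int_{\Omega}\! u_{n}^{2}(\cdot,T)+\tfrac{1}{2}\!\int_{\Omega}\!\varphi_{0}^{2}+\tfrac{1}{n}\!\int_{Q_{T}}\!x^{2}(\partial_{x}\tilde u_{n})^{2}-\!\int_{Q_{T}}\! x\,\partial_{x}\tilde u_{n}^{2}\,u_{n}+\!\int_{Q_{T}}\! g_{0}u_{n}^{2}.
\]
I then pass to the limit term by term: the $1/n$-factor kills the third term by Proposition~\ref{prop_gradbound}; weak--strong pairing together with an IBP in $x$ (using $u=0$ on $\partial\Omega$) yields $\int x\,\partial_{x}\tilde u_{n}^{2}\,u_{n}\to -\tfrac{2}{3}\int u^{3}$; strong $L^{q}$ convergence handles $\int g_{0}u_{n}^{2}\to\int g_{0}u^{2}$; and the time-trace $\int_{\Omega} u_{n}^{2}(\cdot,T)\to\int_{\Omega} u^{2}(\cdot,T)$ follows from the $W^{1,2}(Q_{T})$ bound on $\tilde u_{n}^{2}$ in Proposition~\ref{prop_bound_W12} together with compactness of the trace map.

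Since $J_{n}\ge 0$, it is enough to establish the one-sided inequality $\limsup A_{n}\le \int_{Q_{T}} x^{2}u(\partial_{x}u)^{2}$; paired with the limits for $B_{n},C_{n}$ this forces $\limsup J_{n}\le 0$, and thus $J_{n}\to 0$. The matching lower bound $\liminf A_{n}\ge \int x^{2}u(\partial_{x}u)^{2}$ comes essentially for free from the weak $L^{2}$ convergence of $\sqrt{\tilde u_{n}}\,\partial_{x}\tilde u_{n}=\tfrac{2}{3}\partial_{x}\tilde u_{n}^{3/2}$, available because $\tilde u_{n}^{3/2}$ is uniformly bounded in $W^{1,2}(Q_{T})$. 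The genuinely delicate point is the upper-semicontinuity side: justifying the limiting identity for $\lim A_{n}$ without circularly invoking that $u$ is already a weak solution, which requires careful handling of the $t=T$ trace and of the indefinite-sign source $g_{0}u$. This is where the a priori estimates assembled in Section~4 are indispensable.
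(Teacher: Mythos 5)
There is a genuine gap, and it sits exactly at the point you flag but do not resolve. Your opening H\"older reduction commits you to the strictly stronger claim $J_{n}=\int_{Q_{T}}x^{2}\tilde u_{n}(\partial_{x}(\tilde u_{n}-u))^{2}\to0$, i.e.\ the case $s=1$. The entire reason the lemma (following Abdellaoui--Peral--Walias) is stated with $s<1$ is that the full $L^{1}$ quantity is not accessible with the available estimates: the paper's proof tests with the truncation $T_{\varepsilon}(u_{n}-u_{\nu})$ built from the Landes regularization $u_{\nu}$, which controls the integral only on $\{|u_{n}-u_{\nu}|\le\varepsilon\}$ up to $O(\varepsilon)$ errors, and then disposes of the complementary set $\{|u_{n}-u_{\nu}|>\varepsilon\}$ by H\"older against the smallness of its measure --- a step that works \emph{only} because the integrand is raised to a power $s<1$. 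By reducing to $s=1$ you have discarded the mechanism that makes the bad set harmless.

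Your substitute mechanism --- the energy identity for $A_{n}$ obtained by testing with $u_{n}$ --- is correctly derived, and the limits of $B_{n}$, $C_{n}$ and of most terms on the right-hand side of that identity are fine. But the conclusion requires the limiting right-hand side, namely $-\tfrac12\int_{\Omega}u^{2}(T)+\tfrac12\int_{\Omega}\varphi_{0}^{2}+\tfrac23\int_{Q_{T}}u^{3}+\int_{Q_{T}}g_{0}u^{2}$, to equal $2\int_{Q_{T}}x^{2}u(\partial_{x}u)^{2}$. That equality is precisely the energy identity for the limit equation tested with $u$, and verifying it presupposes that $u$ solves the limit equation --- which in turn requires passing to the limit in the quadratic term $x^{2}(\partial_{x}u_{n})^{2}$, i.e.\ the strong convergence of $\partial_{x}u_{n}$ that this lemma exists to provide. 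Without it, weak convergence only gives $\liminf A_{n}\ge\int x^{2}u(\partial_{x}u)^{2}$ (your ``free'' direction), and the gap between the two sides is exactly a possible defect of strong convergence; nothing in Section 4 closes it. A Minty-type argument identifying the weak-$*$ limit of $(\partial_{x}u_{n})^{2}$ could in principle replace the circular step, but that is a substantial additional argument you have not supplied. As written, the proof does not go through; the paper's $u_{\nu}$/$T_{\varepsilon}$ construction is not an optional refinement but the device that breaks this circularity.
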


\begin{proof}
Recall that $u\in L^{2}\left(0,T;W_{0}^{1,2}(\Omega)\right)$, introduce the time–regularization of $u(x,t)$ by Landes and Mustonen\cite{landes1994parabolic},
\begin{equation*}
    u_{\nu}(x,t)=\exp(-\nu t)\varphi_{0}(x)+\nu\int_{0}^{t}\exp(-\nu(t-s))u(x,s)ds
\end{equation*}

It is known that 
\begin{enumerate}
    \item $u_{\nu}(x,t)$ converge to $u(x,t)$ strongly in $L^{2}\left(0,T;W_{0}^{1,2}(\Omega)\right)$.
    \item $u_{\nu}$ is the solution of the following problem,
    \begin{equation}
    \left\{ \begin{aligned}
        &\frac{1}{\nu}\partial_{t}u_{\nu}+u_{\nu}=u\\
        &u_{\nu}(x,0)=\varphi_{0}(x)
        \end{aligned}\right.
        \label{u_nu_def}
    \end{equation}
\end{enumerate}

 Define a cut-off function $T_{\varepsilon}$ as
\begin{equation}
    T_{\varepsilon}(y)=\left\{ \begin{aligned}
        &y,&\  y \in (-\varepsilon, \varepsilon)\\
        &\text{sign}(y)\varepsilon,&\ \text{otherwise}
        \end{aligned}\right.
        \label{T_ep}
\end{equation}

And define a non-negative function $J_{\varepsilon}(y)$, such that $J^{\prime}_{\varepsilon}(y) = T_{\varepsilon}(y)$,
\begin{equation}
    J_{\varepsilon}(y)=\left\{ \begin{aligned}
        &-\varepsilon y-\frac{1}{2}\varepsilon^{2},&\  y \in (-\infty, -\varepsilon)\\
        &\frac{1}{2}y^{2},&\  y \in (-\varepsilon, \varepsilon)\\
        &\varepsilon y-\frac{1}{2}\varepsilon^{2},&\  y \in (\varepsilon, \infty)
        \end{aligned}\right.
        \label{J_ep}
\end{equation}

It takes two steps to prove that $\int_{Q_{T}}\left[\gamma\tilde{u}_{n}\left(\partial_{x}\left(\tilde{u}_{n}-u\right)\right)^{2}\right]^{s}$ converge to zero,
\begin{enumerate}
    \item prove that $\int_{Q_{T}}\left[\gamma\tilde{u}_{n}\left(\partial_{x}\left(\tilde{u}_{n}-u\right)\right)^{2}\right]^{s}\chi\{|u_{n}-u_{\nu}|\leq\varepsilon\}$ converge to zero
    \item prove that $\int_{Q_{T}}\left[\gamma\tilde{u}_{n}\left(\partial_{x}\left(\tilde{u}_{n}-u\right)\right)^{2}\right]^{s}\chi\{|u_{n}-u_{\nu}| > \varepsilon\}$ converge to zero
\end{enumerate}

For the first step, do the following decomposition

\begin{equation*}
\begin{split} & \int_{Q_{T}}\left[\gamma\tilde{u}_{n}\left(\partial_{x}\left(\tilde{u}_{n}-u\right)\right)^{2}\right]\chi\{|u_{n}-u_{\nu}|\leq\varepsilon\}\\
= & \int_{\{|u_{n}-u_{\nu}|\leq\varepsilon\}}\gamma\tilde{u}_{n}\left(\partial_{x}\left(\tilde{u}_{n}-u\right)\right)^{2}\\
= & \int_{\{|u_{n}-u_{\nu}|\leq\varepsilon\}}\gamma\tilde{u}_{n}\left(\partial_{x}\tilde{u}_{n}\right)\partial_{x}\left(\tilde{u}_{n}-u\right)-\int_{\{|u_{n}-u_{\nu}|\leq\varepsilon\}}\gamma\tilde{u}_{n}\left(\partial_{x}u\right)\partial_{x}\left(\tilde{u}_{n}-u\right)\\
= & \int_{\{|u_{n}-u_{\nu}|\leq\varepsilon\}}\gamma\tilde{u}_{n}\left(\partial_{x}\tilde{u}_{n}\right)\partial_{x}\left(\tilde{u}_{n}-u\right)\\
 & -\int_{Q_{T}}\left[\gamma\left(\tilde{u}_{n}\chi\{|u_{n}-u_{\nu}|\leq\varepsilon\}-u\chi\{|u-u_{\nu}|\leq\varepsilon\}\right)\left(\partial_{x}u\right)\partial_{x}\left(\tilde{u}_{n}-u\right)\right]\\
 & -\int_{Q_{T}}\left[\gamma u\chi\{|u-u_{\nu}|\leq\varepsilon\}\left(\partial_{x}u\right)\partial_{x}\left(\tilde{u}_{n}-u\right)\right]\\
= & A_{1}+A_{2}+A_{3}
\end{split}
\end{equation*}

Start first from $A_{2}$ and $A_{3}$, as their estimates are relatively simple and straightforward.

Indeed, by H\"older's inequality and Corollary \ref{coro_limit_exist}, it follows that,
\begin{equation}
    \begin{split}
        A_{2}=&-\int_{Q_{T}}\left[\gamma\left(\tilde{u}_{n}\chi\{|u_{n}-u_{\nu}|\leq\varepsilon\}-u\chi\{|u-u_{\nu}|\leq\varepsilon\}\right)\left(\partial_{x}u\right)\partial_{x}\left(\tilde{u}_{n}-u\right)\right]\\
        \leq& C_{1}(\Omega,T)\lVert\tilde{u}_{n}\chi\{|u_{n}-u_{\nu}|\leq\varepsilon\}-u\chi\{|u-u_{\nu}|\leq\varepsilon\}\rVert_{L^{2}(Q_{T})}\lVert\left(\partial_{x}u\right)\partial_{x}\left(\tilde{u}_{n}-u\right)\rVert_{L^{2}(Q_{T})}\\
        \leq & C_{2}(\varphi_{0},g_{0},\Omega,T)\lVert\tilde{u}_{n}\chi\{|u_{n}-u_{\nu}|\leq\varepsilon\}-u\chi\{|u-u_{\nu}|\leq\varepsilon\}\rVert_{L^{2}(Q_{T})},
    \end{split}
    \label{A2bound}
\end{equation}

also, the following term will converge to zero,
\begin{equation}
    \begin{split}
        A_{3}=-\int_{Q_{T}}\gamma u\left(\partial_{x}u\right)\left(\partial_{x}\tilde{u}_{n}-\partial_{x}u\right)\chi\{|u-u_{\nu}|\leq\varepsilon\}.
    \end{split}
    \label{A3bound}
\end{equation}

\bigskip
It remains to bound $A_{1} = \int_{\{|u_{n}-u_{\nu}|\leq\varepsilon\}}\gamma \tilde{u}_{n}\left(\partial_{x}\tilde{u}_{n}\right)\partial_{x}\left(\tilde{u}_{n}-u\right)$.

This estimate is performed by first testing Equation(\ref{tilde_eq}) with $T_{\varepsilon}(u_{n}-u_{\nu})$, where $T_{\varepsilon}$ is defined in Equation(\ref{T_ep}), to obtain,
\begin{equation}\label{test_T_ep}
\begin{split} & \left(\partial_{t}\tilde{u}_{n},T_{\varepsilon}(u_{n}-u_{\nu})\right)_{Q_{T}}+\left(\gamma\tilde{u}_{n}\partial_{x}\tilde{u}_{n},\partial_{x}\left(T_{\varepsilon}(u_{n}-u_{\nu})\right)\right)_{Q_{T}}\\
= & \left(-\gamma\left(\partial_{x}\tilde{u}_{n}\right)^{2},T_{\varepsilon}(u_{n}-u_{\nu})\right)_{Q_{T}}+\left(-\frac{\partial_{x}\gamma }{2}\partial_{x}\tilde{u}_{n}^{2},T_{\varepsilon}(u_{n}-u_{\nu})\right)_{Q_{T}}+\left(g_{0}\left(\tilde{u}_{n}-\frac{1}{n}\right),T_{\varepsilon}(u_{n}-u_{\nu})\right)_{Q_{T}}
\end{split}
\end{equation}

Since $\lvert T_{\varepsilon}(u_{n}-u_{\nu}) \rvert \leq \epsilon$, the right hand side of the above equation can be bounded as follows,
\begin{equation*}
\text{RHS}\leq\varepsilon\left(\lVert\gamma\left(\partial_{x}\tilde{u}_{n}\right)^{2}\rVert_{L^{1}(Q_{T})}+\lVert\frac{\partial_{x}\gamma }{2}\partial_{x}\tilde{u}_{n}^{2}\rVert_{L^{1}(Q_{T})}+\lVert g_{0}\left(\tilde{u}_{n}-\frac{1}{n}\right)\rVert_{L^{1}(Q_{T})}\right)
\end{equation*}

with the first term uniformly bounded by Proposition \ref{prop_grad_bound}, the second one uniformly bounded by Corollary \ref{coro:dx_u_square}, and the last term by maximum principle. Consequently,

\begin{equation*}
    \left(\gamma\tilde{u}_{n}\partial_{x}\tilde{u}_{n},\partial_{x}\left(T_{\varepsilon}(u_{n}-u_{\nu})\right)\right)_{Q_{T}}\leq C_{1}(\varphi_{0},g_{0},\Omega,T)\varepsilon-\left(\partial_{t}\tilde{u}_{n},T_{\varepsilon}(u_{n}-u_{\nu})\right)_{Q_{T}}.
\end{equation*}

Since $u_{\nu}$ is a solution of Equation(\ref{u_nu_def}), $\partial_{t} u_{\nu}$ can be replaced with $\nu(u-u_{\nu})$,
\begin{equation*}
    \begin{split}
        \left(\partial_{t}\tilde{u}_{n},T_{\varepsilon}(u_{n}-u_{\nu})\right)_{Q_{T}} =& \left(\partial_{t}\left(u_{n}-u_{\nu}\right),T_{\varepsilon}(u_{n}-u_{\nu})\right)_{Q_{T}}+\left(\partial_{t}u_{\nu},T_{\varepsilon}(u_{n}-u_{\nu})\right)_{Q_{T}}\\
        =& \left(\partial_{t}\left(u_{n}-u_{\nu}\right),T_{\varepsilon}(u_{n}-u_{\nu})\right)_{Q_{T}}+\nu\left(\left(u-u_{\nu}\right),T_{\varepsilon}(u_{n}-u_{\nu})\right)_{Q_{T}}\\
        =& \left(1,\partial_{t}J_{\varepsilon}(u_{n}-u_{\nu})\right)_{Q_{T}}+\nu\left(\left(u-u_{\nu}\right),T_{\varepsilon}(u_{n}-u_{\nu})\right)_{Q_{T}}\\
        =& \left(1,J_{\varepsilon}(u_{n}(T)-u_{\nu}(T))\right)_{\Omega}-\left(1,J_{\varepsilon}(u_{n}(0)-u_{\nu}(0))\right)_{\Omega}+\nu\left(\left(u-u_{\nu}\right),T_{\varepsilon}(u_{n}-u_{\nu})\right)_{Q_{T}},
    \end{split}
\end{equation*}

in which $J_{\varepsilon}$ is defined in Equation(\ref{J_ep}) as the anti-derivative of $T_{\varepsilon}$. 

Each term on the right hand side is bounded from below. 

Indeed, by definition of $J_{\varepsilon}$,
\begin{equation}
    \left(1,J_{\varepsilon}(u_{n}(T)-u_{\nu}(T))\right)_{\Omega}\geq0.
    \label{posi_J_ep}
\end{equation}

Since $u_{n}$ and $u_{\nu}$ share the same initial condition, the second term is actually zero.
\begin{equation}
    \left(1,J_{\varepsilon}(u_{n}(0)-u_{\nu}(0))\right)_{\Omega}=\left(1,J_{\varepsilon}(\varphi_{0}-\varphi_{0})\right)_{\Omega}=0
    \label{same_init}
\end{equation}

By the sign-keeping property of $T_{\varepsilon}$,
\begin{equation}
    \begin{split}
        \nu\left(\left(u-u_{\nu}\right),T_{\varepsilon}(u_{n}-u_{\nu})\right)_{Q_{T}}=&\nu\left(\left(u-u_{\nu}\right),T_{\varepsilon}(u-u_{\nu}-u+u_{n})\right)_{Q_{T}}\\
        =&\nu\left(\left(u-u_{\nu}\right),T_{\varepsilon}(u-u_{\nu})\right)_{Q_{T}}+\nu\left(\left(u-u_{\nu}\right),T_{\varepsilon}(u_{n}-u)\right)_{Q_{T}}\\
        \geq&\nu\left(\left(u-u_{\nu}\right),T_{\varepsilon}(u_{n}-u)\right)_{Q_{T}}
    \end{split}
    \label{sign_keep}
\end{equation}

Therefore, combining inequalities (\ref{posi_J_ep}), (\ref{same_init}) and (\ref{sign_keep}),
\begin{equation*}
    \begin{split}
        \left(\gamma\tilde{u}_{n}\partial_{x}\tilde{u}_{n},\partial_{x}\left(T_{\varepsilon}(u_{n}-u_{\nu})\right)\right)_{Q_{T}} 
        \leq &C_{1}(\varphi_{0},g_{0},\Omega,T)\varepsilon-\left(\partial_{t}\tilde{u}_{n},T_{\varepsilon}(u_{n}-u_{\nu})\right)_{Q_{T}}\\
        \leq & C_{1}(\varphi_{0},g_{0},\Omega,T)\varepsilon - \nu\left(\left(u-u_{\nu}\right),T_{\varepsilon}(u_{n}-u)\right)_{Q_{T}}
    \end{split}
\end{equation*}

Therefore,
\begin{equation}
    \begin{split}
        A_{1}
        &=\int_{\{|u_{n}-u_{\nu}|\leq\varepsilon\}}\gamma \tilde{u}_{n}\left(\partial_{x}\tilde{u}_{n}\right)\partial_{x}\left(u_{n}-u\right)\\
        &=\int_{\{|u_{n}-u_{\nu}|\leq\varepsilon\}}\gamma \tilde{u}_{n}\left(\partial_{x}\tilde{u}_{n}\right)\partial_{x}\left(u_{n}-u_{\nu}\right)+\int_{\{|u_{n}-u_{\nu}|\leq\varepsilon\}}\gamma \tilde{u}_{n}\left(\partial_{x}\tilde{u}_{n}\right)\partial_{x}\left(u_{\nu}-u\right)\\
        &=\int_{Q_{T}}\gamma \tilde{u}_{n}\left(\partial_{x}\tilde{u}_{n}\right)\partial_{x}\left(T_{\varepsilon}\left(u_{n}-u_{\nu}\right)\right)+\int_{\{|u_{n}-u_{\nu}|\leq\varepsilon\}}\gamma \tilde{u}_{n}\left(\partial_{x}\tilde{u}_{n}\right)\partial_{x}\left(u_{\nu}-u\right)\\
        &\leq C_{1}(\varphi_{0},g_{0},\Omega,T)\varepsilon-\nu\left(\left(u-u_{\nu}\right),T_{\varepsilon}(u_{n}-u)\right)_{Q_{T}}+\int_{\{|u_{n}-u_{\nu}|\leq\varepsilon\}}\gamma \tilde{u}_{n}\left(\partial_{x}\tilde{u}_{n}\right)\partial_{x}\left(u_{\nu}-u\right)
    \end{split}
    \label{A1bound}
\end{equation}

Putting together the inequalities (\ref{A1bound}), (\ref{A2bound}) and (\ref{A3bound}),
\begin{equation}
    \begin{split}
        &\int_{Q_{T}}\left[\gamma \tilde{u}_{n}\left(\partial_{x}\left(\tilde{u}_{n}-u\right)\right)^{2}\right]\chi\{|u_{n}-u_{\nu}|\leq\varepsilon\}\\
        =&A_{1}+A_{2}+A_{3}\\
        \leq & C_{1}(\varphi_{0},g_{0},\Omega,T)\varepsilon-\nu\left(\left(u-u_{\nu}\right),T_{\varepsilon}(u_{n}-u)\right)_{Q_{T}}+\int_{\{|u_{n}-u_{\nu}|\leq\varepsilon\}}\gamma \tilde{u}_{n}\left(\partial_{x}\tilde{u}_{n}\right)\partial_{x}\left(u_{\nu}-u\right)\\
        &+C_{2}(\varphi_{0},g_{0},\Omega,T)\lVert\tilde{u}_{n}\chi\{|u_{n}-u_{\nu}|\leq\varepsilon\}-u\chi\{|u-u_{\nu}|\leq\varepsilon\}\rVert_{L^{2}(Q_{T})}\\
        &-\int_{Q_{T}}\gamma u\left(\partial_{x}u\right)\left(\partial_{x}\tilde{u}_{n}-\partial_{x}u\right)\chi\{|u-u_{\nu}|\leq\varepsilon\}\\
        =&B_{1}(n,\nu,\varepsilon)
    \end{split}
    \label{first_s_bound}
\end{equation}

Since $\partial_{x} \tilde{u}_{n}$ converge to $\partial_{x}u$ weakly in $L^{2}(Q_{T})$, the last term of $B_{1}$ converges to zero as $n$ goes to infinity, therefore,
\begin{equation*}
    \lim_{\varepsilon\rightarrow0^{+}}\limsup_{\nu\rightarrow\infty}\limsup_{n\rightarrow\infty}B_{1}(n,\nu, \varepsilon) = 0
\end{equation*}

\bigskip
For the second step, consider $\int_{Q_{T}}\left[\gamma \tilde{u}_{n}\left(\partial_{x}\left(\tilde{u}_{n}-u\right)\right)^{2}\right]^{s}\chi\{|u_{n}-u_{\nu}| > \varepsilon\}$, using H\"older's inequality, 
\begin{equation}
    \begin{split}
        &\int_{Q_{T}}\left[\gamma \tilde{u}_{n}\left(\partial_{x}\left(\tilde{u}_{n}-u\right)\right)^{2}\right]^{s}\chi\{|u_{n}-u_{\nu}| > \varepsilon\} \\
        \leq & C_{1}(\Omega, T)\lVert\left(\partial_{x}u_{n}-\partial_{x}u\right)^{2s}\rVert_{L^{\rho'}(Q_{T})}\cdot\left(\lVert\chi\{|u_{n}-u_{\nu}|>\varepsilon\}-\chi\{|u-u_{\nu}|>\varepsilon\}\rVert_{L^{\rho}(Q_{T})}+\lVert\chi\{|u-u_{\nu}|>\varepsilon\}\rVert_{L^{\rho}(Q_{T})}\right)\\
        = & B_{2}(n,\nu,\varepsilon)
    \end{split}
    \label{second_s_bound}
\end{equation}

Taking the limit,
\begin{equation*}
    \lim_{\varepsilon\rightarrow0^{+}}\limsup_{\nu\rightarrow\infty}\limsup_{n\rightarrow\infty}B_{2}(n,\nu, \varepsilon) = 0
\end{equation*}

To summarize,
\begin{equation*}
    0 \leq \int_{Q_{T}}\left[\gamma \tilde{u}_{n}\left(\partial_{x}\left(\tilde{u}_{n}-u\right)\right)^{2}\right]^{s} \leq B_{1}(n,\nu, \varepsilon) + B_{2}(n,\nu, \varepsilon),
\end{equation*}
where $B_{1}$ and $B_{2}$ are on the right hand side of Equation (\ref{first_s_bound}) and (\ref{second_s_bound}). Consequently,
\begin{equation*}
    \lim_{n\rightarrow\infty}\int_{Q_{T}}\left[\gamma \tilde{u}_{n}\left(\partial_{x}\left(\tilde{u}_{n}-u\right)\right)^{2}\right]^{s} = 0.
\end{equation*}

\end{proof}

\begin{theorem}The sequence $\partial_{x}\tilde{u}_{n}^{2}=\partial_{x}(u_{n}+\frac{1}{n})^{2}$ converge to $\partial_{x}u^{2}$ strongly in $L^{\sigma}(Q_{T})$ for all $\sigma \in (0,2)$.
\label{thm_square_converge}
\end{theorem}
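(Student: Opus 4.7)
The plan is to deduce the strong $L^\sigma$ convergence of $\partial_x \tilde{u}_n^2$ from Lemma \ref{lemma_sqr_convergence} via the elementary product-rule decomposition
\begin{equation*}
\partial_x \tilde{u}_n^2 - \partial_x u^2 \;=\; 2\tilde{u}_n \,\partial_x(\tilde{u}_n - u) \;+\; 2(\tilde{u}_n - u)\,\partial_x u,
\end{equation*}
and to show that each of the two pieces on the right-hand side tends to zero in $L^\sigma(Q_T)$ for every $\sigma \in (0,2)$.

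For the first piece, the key algebraic observation will be
\begin{equation*}
|\tilde{u}_n \,\partial_x(\tilde{u}_n - u)|^\sigma \;=\; \tilde{u}_n^{\sigma/2} \cdot \bigl[\tilde{u}_n (\partial_x(\tilde{u}_n - u))^2\bigr]^{\sigma/2}.
\end{equation*}
Since $\Omega = (x_a, x_b)$ with $x_a > 0$, the weight $x^2$ is uniformly bounded away from zero on $\Omega$, so I can control $[\tilde{u}_n (\partial_x(\tilde{u}_n - u))^2]^{\sigma/2}$ by a constant multiple of $[x^2 \tilde{u}_n (\partial_x(\tilde{u}_n - u))^2]^{\sigma/2}$. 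Combined with the uniform $L^\infty$ bound on $\tilde{u}_n$ coming from the maximum principle (Theorem \ref{thm_max}), the choice $s = \sigma/2 \in (0,1)$ lies precisely in the admissible range of Lemma \ref{lemma_sqr_convergence}, yielding $\|\tilde{u}_n \,\partial_x(\tilde{u}_n - u)\|_{L^\sigma(Q_T)} \to 0$.

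For the second piece I would apply Hölder's inequality with exponents $p, q$ satisfying $1/p + 1/q = 1/\sigma$, taking $q = 2l$ large enough so that $\|\partial_x u\|_{L^q(Q_T)}$ is finite (this follows from Proposition \ref{prop_gradbound} together with lower semicontinuity of the norm under the weak convergence of Corollary \ref{coro_weak_conv_subseq}). The remaining factor $\|\tilde{u}_n - u\|_{L^p(Q_T)}$ tends to zero by Corollary \ref{coro_strong_conv_subseq}, so this piece actually converges to zero in $L^\sigma(Q_T)$ for every $\sigma < \infty$.

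The only genuine constraint, and the reason the theorem is stated for $\sigma \in (0,2)$, is the restriction $s \in (0,1)$ in Lemma \ref{lemma_sqr_convergence}; pushing $\sigma$ up to $2$ would be equivalent to strong $L^1$ convergence of $\tilde{u}_n (\partial_x(\tilde{u}_n - u))^2$, i.e.\ to strong $L^2$ convergence of the gradients themselves, which has not been established and is not expected to hold at this stage.
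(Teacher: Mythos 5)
Your proof is correct and follows essentially the same route as the paper: the identical decomposition $\partial_x\tilde{u}_n^2-\partial_x u^2=2\tilde{u}_n\partial_x(\tilde{u}_n-u)+2(\tilde{u}_n-u)\partial_x u$, with the first piece controlled by Lemma \ref{lemma_sqr_convergence} (using the maximum principle and $x\geq x_a>0$ to absorb the factors $\tilde{u}_n^{\sigma/2}$ and $x^{-\sigma}$) and the second by H\"older together with Corollary \ref{coro_strong_conv_subseq}. You merely spell out the details of both pieces more explicitly than the paper does.
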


\begin{proof}
    Note that 
    \begin{equation*}
        \begin{split}
            &\int_{Q_{T}}\lvert\partial_{x}\tilde{u}_{n}^{2}-\partial_{x}u^{2}\rvert^{2s}\\
            =&2^{2s}\int_{Q_{T}}\lvert\tilde{u}_{n}\partial_{x}\tilde{u}_{n}-u\partial_{x}u\rvert^{2s}\\
            =&2^{2s}\int_{Q_{T}}\lvert\tilde{u}_{n}\partial_{x}\tilde{u}_{n}-\tilde{u}_{n}\partial_{x}u+\tilde{u}_{n}\partial_{x}u-u\partial_{x}u\rvert^{2s}\\
            =&2^{2s}\int_{Q_{T}}\lvert\left(\tilde{u}_{n}\partial_{x}\tilde{u}_{n}-\tilde{u}_{n}\partial_{x}u\right)+\partial_{x}u\left(\tilde{u}_{n}-u\right)\rvert^{2s}\\
            \leq & C\int_{Q_{T}}\left(\lvert\tilde{u}_{n}\partial_{x}\tilde{u}_{n}-\tilde{u}_{n}\partial_{x}u\rvert^{2s}+\lvert\partial_{x}u\left(\tilde{u}_{n}-u\right)\rvert^{2s}\right)
        \end{split}
    \end{equation*}
    By Lemma \ref{lemma_sqr_convergence} and Corollary \ref{coro_limit_exist}, both terms converge to zero if $s \in (0, 1)$. Let $\sigma = 2s$, then $\sigma \in (0,2)$.
\end{proof}

\begin{theorem}
    The sequence $\partial_{x}u_{n}$ converge to $\partial_{x}u$ a.e. in $Q_{T}$
    \label{thm_converge_ae}
\end{theorem}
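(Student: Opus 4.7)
The plan is to decompose $Q_T$ according to whether $u>0$ or $u=0$ and handle the two regions by different arguments. First I would extract subsequences so that $\partial_x \tilde u_n^{2} \to \partial_x u^{2}$ a.e.\ in $Q_T$ (from Theorem \ref{thm_square_converge}) and $\tilde u_n \to u$ a.e.\ in $Q_T$ (from Corollary \ref{coro_strong_conv_subseq}). Since $\partial_x u_n = \partial_x \tilde u_n = \partial_x \tilde u_n^{2}/(2\tilde u_n)$, the basic idea is simply to divide and pass to the limit, with Proposition \ref{prop_power} providing the extra control needed wherever $u$ vanishes.

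On the easy region $\{u>0\}$: at a.e.\ point where both subsequential limits above hold and $u>0$, the denominator $2\tilde u_n$ converges to $2u>0$ and the numerator converges, so $\partial_x \tilde u_n \to \partial_x u^{2}/(2u) = \partial_x u$. On $\{u=0\}$, a classical Stampacchia-type result for Sobolev functions (using $u\in W_0^{1,2l}$) gives $\partial_x u = 0$ a.e., so the target limit on this set is simply zero.

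The main obstacle is the degenerate region: I need $\partial_x \tilde u_n \to 0$ a.e.\ on $\{u=0\}$ along a further subsequence. The identity $\partial_x\tilde u_n^{2}=2\tilde u_n\partial_x\tilde u_n$ alone is insufficient, since it only controls a product of two quantities that both tend to zero on this set. To circumvent this I would use Egorov's theorem to find sets $A_\epsilon$ with $|Q_T\setminus A_\epsilon|<\epsilon$ on which $\tilde u_n\to u$ uniformly, so that $M_n:=\sup_{A_\epsilon\cap\{u=0\}}\tilde u_n\to 0$, and combine this with the factorization $|\partial_x\tilde u_n|=\tilde u_n^{\theta}\cdot(\tilde u_n^{-\theta}|\partial_x\tilde u_n|)$ for some $\theta\in(0,1/2)$. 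Since $x_a>0$, the weight $x^{2}$ in Proposition \ref{prop_power} is bounded below on $Q_T$, so that proposition yields a uniform $L^1$ bound on $\tilde u_n^{-\theta}|\partial_x\tilde u_n|$ on any compact $K\subset Q_T$. Multiplying by $\tilde u_n^{\theta}\le M_n^{\theta}$ on $A_\epsilon\cap\{u=0\}$ produces $\|\partial_x\tilde u_n\|_{L^{1}(K\cap A_\epsilon\cap\{u=0\})}\le C M_n^{\theta}\to 0$.

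Finally, $L^1$ convergence to zero gives a.e.\ convergence along a subsequence, and a diagonal extraction over compact exhaustions $K_j\uparrow Q_T$ with $\epsilon_j\to 0$ delivers a single subsequence along which $\partial_x\tilde u_n\to 0$ a.e.\ on $\{u=0\}$. Combined with the easy case on $\{u>0\}$, this yields a.e.\ convergence of $\partial_x u_n$ to $\partial_x u$ on all of $Q_T$.
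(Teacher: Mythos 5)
Your proposal is correct and uses the same two key ingredients as the paper --- Theorem \ref{thm_square_converge} for the region where $u>0$ and Egorov's theorem combined with Proposition \ref{prop_power} for the degenerate set $\{u=0\}$ --- but the mechanism differs in a way worth noting. The paper works at the level of integrals: it fixes $\psi\ge 0$, shows $\int_{Q_T}|\partial_x u_n-\partial_x u|^{\alpha}\psi\to 0$, and therefore needs a three-way decomposition $\{u=0\}\cup\{0<u\le \tfrac1m\}\cup\{u>\tfrac1m\}$, with the intermediate strip controlled by the smallness of its measure because dividing by $u$ inside an integral requires a uniform lower bound. You instead extract subsequences along which $\tilde u_n\to u$ and $\partial_x\tilde u_n^{2}\to\partial_x u^{2}$ pointwise a.e.\ and simply divide by $2\tilde u_n$ at each point where $u>0$; this eliminates the intermediate region entirely and makes explicit the fact (only implicit in the paper) that $\partial_x u=0$ a.e.\ on $\{u=0\}$. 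On the degenerate set your factorization $|\partial_x\tilde u_n|=\tilde u_n^{\theta}\cdot\tilde u_n^{-\theta}|\partial_x\tilde u_n|$ with $\sup_{A_\epsilon\cap\{u=0\}}\tilde u_n\to 0$ is the same estimate the paper performs via the inclusion $\{u=0\}\setminus E_\epsilon\subset\{u_n\le\mu\}$ and the prefactor $(\mu+\tfrac1n)^{\theta\alpha}$. The trade-off is that your route requires repeated subsequence extraction and a diagonal argument over compact exhaustions, whereas the paper's integral formulation handles localization through the single test function $\psi$; since the entire construction in the paper already proceeds along subsequences, this costs nothing, and both arguments deliver the conclusion at the same level of rigor.
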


\begin{proof}
Let $\psi\in C_{0}^{\infty}(Q_{T})$ be s.t. $\psi\geq0$ in $Q_{T}$. To prove convergence a.e., it is sufficient to show that for some $\alpha \in (0,1)$,
\begin{equation*}
    \lim_{n\rightarrow\infty}\int_{Q_{T}}|\partial_{x}u_{n}-\partial_{x}u|^{\alpha}\psi=0
\end{equation*}

Decompose the domain $Q_{T}$,
\begin{equation}
    \begin{split}
        \int_{Q_{T}}|\partial_{x}u_{n}-\partial_{x}u|^{\alpha}\psi=&\int_{\{u=0\}}|\partial_{x}u_{n}-\partial_{x}u|^{\alpha}\psi+\int_{\{u>0\}}|\partial_{x}u_{n}-\partial_{x}u|^{\alpha}\psi\\
        =& \int_{\{u=0\}}|\partial_{x}u_{n}|^{\alpha}\psi+\int_{\{0<u\leq\frac{1}{m}\}}|\partial_{x}u_{n}-\partial_{x}u|^{s}\psi+\int_{\{u>\frac{1}{m}\}}|\partial_{x}u_{n}-\partial_{x}u|^{s}\psi\\
        =& A_{1} + A_{2} + A_{3}.
    \end{split}
    \label{domaindecomp}
\end{equation}

Using H\"older's inequality to get the bound of $A_{2}$,
\begin{equation*}
    \begin{split}
        A_{2} &= \int_{\{0<u\leq\frac{1}{m}\}}|\partial_{x}u_{n}-\partial_{x}u|^{s}\psi\\
        &\leq \lVert|\partial_{x}u_{n}-\partial_{x}u|^{s}\psi\rVert_{L^{2/s}(Q_{T})}\lVert\chi_{\{0<u\leq\frac{1}{m}\}}\rVert_{L^{\frac{2}{2-s}}(Q_{T})}\\
        & \leq C \lVert\chi_{\{0<u\leq\frac{1}{m}\}}\rVert_{L^{\frac{2}{2-s}}(Q_{T})}.
    \end{split}
\end{equation*}

Note that $\lVert\chi_{\{0<u\leq\frac{1}{m}\}}\rVert_{L^{\frac{2}{2-s}}(Q_{T})}$ can be arbitrarily small.

Next, by Theorem \ref{thm_square_converge}, it is known that $\partial_{x}\tilde{u}^{2}_{n}\rightarrow\partial_{x}u^{2}$ strongly in $L^{\sigma}(Q_{T})$ for all $\sigma < 2$, therefore $A_{3}$ converges to zero, in fact, 
\begin{equation*}
    \begin{split}
        A_{3}&=\int_{\{u>\frac{1}{m}\}}\frac{1}{\lvert u\rvert^{s}}\lvert u\partial_{x}u_{n}-u\partial_{x}u\rvert^{s}\psi\\
        &=\int_{\{u>\frac{1}{m}\}}\frac{1}{\lvert u\rvert^{s}}\lvert\left(u-\tilde{u}_{n}\right)\partial_{x}\tilde{u}_{n}+\frac{1}{2}\left(\partial_{x}\tilde{u}_{n}^{2}-\partial_{x}u^{2}\right)\rvert^{s}\psi\\
        &\leq m^{s}\int_{Q_{T}}\lvert\left(u-\tilde{u}_{n}\right)\partial_{x}\tilde{u}_{n}+\frac{1}{2}\left(\partial_{x}\tilde{u}_{n}^{2}-\partial_{x}u^{2}\right)\rvert^{s}\psi,
    \end{split}
\end{equation*}

and the limit follows from
\begin{equation*}
    \limsup_{n\rightarrow\infty}A_{3}(n)\leq m^{s}\limsup_{n\rightarrow\infty}\int_{Q_{T}}\lvert\left(u-\tilde{u}_{n}\right)\partial_{x}\tilde{u}_{n}+\frac{1}{2}\left(\partial_{x}\tilde{u}_{n}^{2}-\partial_{x}u^{2}\right)\rvert^{s}\psi=0
\end{equation*}

Considering $A_{1}$ of Equation \ref{domaindecomp}, since $u_{n} \rightarrow u$ strongly in $L^{q}(Q_{T})$, by Egorov's Lemma, for every $\epsilon > 0$, there exists a measurable set $E_{\epsilon}$ such that $\lvert E_{\epsilon}\rvert \leq \epsilon$ and $u_{n} \rightarrow u$ uniformly in $Q_{T} \backslash E_{\epsilon}$.

\begin{equation*}
    \int_{\{u=0\}}|\partial_{x}u_{n}|^{\alpha}\psi=\int_{\{u=0\}\cap E_{\epsilon}}|\partial_{x}u_{n}|^{\alpha}\psi+\int_{\{u=0\}\cap Q_{T}\backslash E_{\epsilon}}|\partial_{x}u_{n}|^{\alpha}\psi
\end{equation*}
The first term is bounded through H\"older's inequality,
\begin{equation*}
    \int_{\{u=0\}\cap E_{\epsilon}}|\partial_{x}u_{n}|^{\alpha}\psi=\int_{Q_{T}}|\partial_{x}u_{n}|^{\alpha}\psi\chi_{\{u=0\}\cap E_{\epsilon}}\leq\int_{Q_{T}}|\partial_{x}u_{n}|^{\alpha}\psi\chi_{E_{\epsilon}}\leq C\lVert|\partial_{x}u_{n}|^{\alpha}\rVert_{L^{1/\alpha}(Q_{T})}\lVert\chi_{E_{\epsilon}}\rVert_{L^{1/(1-\alpha)}(Q_{T})}\leq C\epsilon^{1-\alpha}
\end{equation*}

The second one uses the fact that for any $\mu > 0$, there exists $N$ such that $|u_{n}-u|=|u_{n}|<\mu$ for all $n>N$ and for all $x\in \{u=0\}\cap Q_{T}\backslash E_{\epsilon}$. In other words, for $n>N$, $\{u=0\}\cap Q_{T}\backslash E_{\epsilon}$ is a subset of $\{u_{n}\leq \mu\}\cap Q_{T}\backslash E_{\epsilon}$, hence the integral
\begin{equation*}
    \begin{split}
        &\int_{\{u=0\}\cap Q_{T}\backslash E_{\epsilon}}|\partial_{x}u_{n}|^{\alpha}\psi\\
        \leq&\int_{\{u_{n}\leq \mu\}\cap Q_{T}\backslash E_{\epsilon}}|\partial_{x}u_{n}|^{\alpha}\psi\\
        \leq&\left(\mu+\frac{1}{n}\right)^{\theta\alpha}\int_{\{u_{n}\leq \mu\}\cap Q_{T}\backslash E_{\epsilon}}\left(\frac{|\partial_{x}u_{n}|}{(u_{n}+\frac{1}{n})^{\theta}}\right)^{\alpha}\psi\\
        \leq & \left(\mu+\frac{1}{n}\right)^{\theta\alpha}\int_{Q_{T}}\left(\frac{|\partial_{x}u_{n}|}{(u_{n}+\frac{1}{n})^{\theta}}\right)^{\alpha}\psi
    \end{split}
\end{equation*}

The boundedness of $\int_{Q_{T}}\left(\frac{|\partial_{x}u_{n}|}{(u_{n}+\frac{1}{n})^{\theta}}\right)^{\alpha}\psi$ is secured by Proposition \ref{prop_power}. The result follows from taking $\mu \rightarrow0$.

\end{proof}

\section{Existence of global weak solutions}\label{sec_existence}

Using Equation(\ref{zn_norm}) in another direction, we have the following lemma which is essential for the proof of Theorem \ref{thm_exist}.
    \begin{lemma}\label{lemma_boundaryterm}
        If $u_{n}$ are classical solutions to the problem(\ref{divergence}) for $t\in[0,T]$, then the integral of $\partial_{x}u_{n}$ on $(0,T)$ is bounded as follows,
        \begin{equation*}
            \left\Vert\int_{0}^{T}\partial_{x}u_{n}(\cdot,t)dt\right\Vert_{L^{\infty}(\Omega)}\leq C(\varphi_{0},g_{0},T)\sqrt{n}
        \end{equation*}
    \end{lemma}

    \begin{proof}
        Let
        \begin{equation*}
            y_{n}(x)=\int_{0}^{T}z_{n}(x,t)dt=\int_{0}^{T}\partial_{x}u_{n}(x,t)dt
        \end{equation*}

        The goal is to bound $\left\Vert y_{n}(x)\right\Vert_{L^{\infty}(\Omega)}$.

        Note that by definition
        \begin{equation*}
            \int_{a}^{b}y_{n}(x)dx=\int_{0}^{T}\int_{a}^{b}\partial_{x}u_{n}(x,t)dxdt=\int_{0}^{T}\left(u_{n}(b,t)-u_{n}(a,t)\right)dt=0,
        \end{equation*}

        hence it is possible to use the Poincare's inequality if we can derive a bound for $\lVert\partial_{x}y_{n}\rVert_{L^{2}(\Omega)}=\left[\int_{a}^{b}\left(\partial_{x}y_{n}\right)^{2}dx\right]^{\frac{1}{2}}$.

        By H\"older's inequality,
        \begin{equation*}
            \left\vert\partial_{x}y_{n}(x)\right\vert^{2}=\left\vert\int_{0}^{T}\partial_{x}z_{n}(x,t)dt\right\vert^{2}\leq\left(\int_{0}^{T}|\partial_{x}z_{n}(x,t)|dt\right)^{2}\leq T\cdot\left(\int_{0}^{T}|\partial_{x}z_{n}(x,t)|^{2}dt\right)
        \end{equation*}

        Integrate on $\Omega$, the above inequality yields,
        \begin{equation}\label{ineq_bound_l2dyn}
            \lVert\partial_{x}y_{n}\rVert_{L^{2}(\Omega)}^{2}=\int_{a}^{b}|\partial_{x}y_{n}(x)|^{2}dx\leq T\cdot\left(\int_{a}^{b}\int_{0}^{T}|\partial_{x}z_{n}(x,t)|^{2}dtdx\right)=T\cdot\lVert\partial_{x}z_{n}\rVert_{L^{2}(Q_{T})}^{2}.
        \end{equation}

        To bound the right hand side, recall Equation(\ref{zn_norm}) with $l=0$, 
        \begin{equation}\label{eq_bound_zn}
            \left(\gamma (u_{n}+\frac{1}{n})\partial_{x}z_{n},\partial_{x}z_{n}\right)_{\Omega}=-\left(\partial_{t}z_{n},z_{n}\right)_{\Omega}+\left(g_{0}z_{n},z_{n}\right)_{\Omega}+\left(u_{n}\partial_{x}g_{0},z_{n}\right)_{\Omega}
        \end{equation}

        Since $u_{n} \geq 0$ and $\gamma(x)\geq \gamma_{m}$ on $\Omega$, it follows that
        \begin{equation*}
            \gamma_{m}\frac{1}{n}\left(\partial_{x}z_{n},\partial_{x}z_{n}\right)_{\Omega}\leq\left(\gamma(u_{n}+\frac{1}{n})\partial_{x}z_{n},\partial_{x}z_{n}\right)_{\Omega}.
        \end{equation*}

        Integrate on $(0,T)$ to obtain,
        \begin{equation*}
            \left(\partial_{x}z_{n},\partial_{x}z_{n}\right)_{Q_{T}}\leq\frac{n}{\gamma_{m}}\left[-\frac{1}{2}\left(z_{n}(T),z_{n}(T)\right)_{\Omega}+\frac{1}{2}\left(z_{n}(0),z_{n}(0)\right)_{\Omega}+\left(g_{0}z_{n},z_{n}\right)_{Q_{T}}+\left(u_{n}\partial_{x}g_{0},z_{n}\right)_{Q_{T}}\right]
        \end{equation*}

        By Proposition \ref{prop_grad_bound} and the maximum principle, the above inequality yields,
        \begin{equation*}
            \left(\partial_{x}z_{n},\partial_{x}z_{n}\right)_{Q_{T}}\leq C_{1}(\varphi_{0},g_{0},T)n.
        \end{equation*}
        Combining the inequality above with inequality (\ref{ineq_bound_l2dyn}), we have
        \begin{equation*}
            \lVert\partial_{x}y_{n}\rVert_{L^{2}(\Omega)}^{2} \leq C_{1}(\varphi_{0},g_{0},T)Tn.
        \end{equation*}
        Since $y_{n}$ has zero mean, the Poincare's inequality renders
        \begin{equation*}
            \lVert y_{n}\rVert_{H^{1}(\Omega)}^{2}\leq C_{2}(\varphi_{0},g_{0},T)n.
        \end{equation*}
        Thus by Sobolev's inequality we have
        \begin{equation*}
            \lVert y_{n}\rVert_{L^{\infty}(\Omega)}\leq C(\varphi_{0},g_{0},T) \sqrt{n}.
        \end{equation*}
        
    \end{proof}

% \begin{todiscuss}
%     we might obtain the result for each $t$ if we have regularity of $\partial_{t} u$
% \end{todiscuss}

We are now ready to prove the main result of the paper, i.e. Theorem \ref{thm_exist}.

\begin{figure}
    \begin{center}
        \begin{tikzpicture}
            \draw[thick] (0,0) -- (2,0) node[midway, below] {$T_{1}$};
            \draw[thick] (2,0) -- (5,0) node[midway, below] {$T_{2}$};
            \draw[thick,->] (5,0) -- (8,0)  node[anchor=north west] {$x$} node[midway, below] {$T_{3}$};
            \draw[thick,->] (0,0) -- (0,3)  node[anchor=north east] {$\tilde{u}_{n}$};
            \draw[thick,dashed] (2,0) -- (2,3);
            \filldraw[black] (2,0) circle (1pt) node[anchor=north]{$a$};
            \draw[thick,dashed] (5,0) -- (5,3);
            \filldraw[black] (5,0) circle (1pt) node[anchor=north]{$b$};
            \draw[blue,thick] (0,1) -- (2,1);
            \filldraw[black] (0,1) circle (1pt) node[anchor=east]{$\frac{1}{n}$};
            \draw[blue,thick] (2,1) .. controls (2.5, 1) and (2.5,2) .. (3,2);
            \draw[blue,thick] (3,2) .. controls (3.5, 2) and (4.5,1) .. (5,1);
            \draw[blue,thick] (5,1) .. controls (6,0.2) and (6,0.25) .. (8,0.2);
        \end{tikzpicture}
    \end{center}
    \caption{The lifted extension}
    \label{fig_sol}
\end{figure}
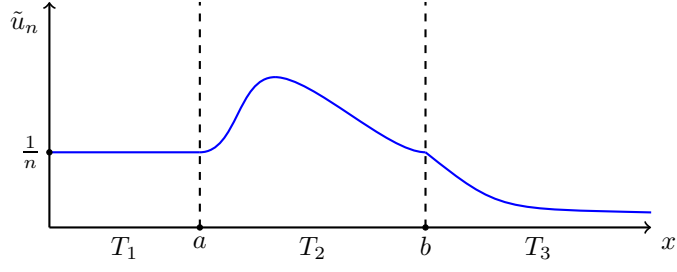

\begin{proof}

Note that any integral on $\Omega^{*}$ is a summation of three pieces, 
\begin{equation*}
\begin{split}\left(\tilde{u}_{n},\partial_{t}\eta\right)_{Q_{T}^{*}} & =\int_{0}^{T}\int_{0}^{a}\frac{1}{n}\left(\partial_{t}\eta\right)dxdt+\left(u_{n}+\frac{1}{n},\partial_{t}\eta\right)_{Q_{T}}+\int_{0}^{T}\int_{b}^{+\infty}\frac{1}{n}h\left(\partial_{t}\eta\right)dxdt,\\
\left(\gamma\tilde{u}_{n}\partial_{x}\tilde{u}_{n},\partial_{x}\eta\right)_{Q_{T}^{*}} & =0+\left(\gamma(u_{n}+\frac{1}{n})\partial_{x}u_{n},\partial_{x}\eta\right)_{Q_{T}}+\int_{0}^{T}\int_{b}^{+\infty}\gamma\left(\frac{1}{n}h\right)\left(\frac{1}{n}\partial_{x}h\right)\left(\partial_{x}\eta\right)dxdt,\\
\left(\gamma\left(\partial_{x}\tilde{u}_{n}\right)^{2},\eta\right)_{Q_{T}^{*}} & =0+\left(\gamma\left(\partial_{x}u_{n}\right)^{2},\eta\right)_{Q_{T}}+\int_{0}^{T}\int_{b}^{+\infty}\left[\gamma\left(\frac{1}{n}\partial_{x}h\right)^{2}\eta\right]dxdt,\\
\left(\frac{\partial_{x}\gamma }{2}\partial_{x}\tilde{u}_{n}^{2},\eta\right)_{Q_{T}^{*}} & =0+\left(\left(\partial_{x}\gamma \right)(u_{n}+\frac{1}{n})\partial_{x}u_{n},\eta\right)_{Q_{T}}+\int_{0}^{T}\int_{b}^{+\infty}\left[\frac{\partial_{x}\gamma }{2}\left(\frac{1}{n^{2}}\partial_{x}h^{2}\right)\eta\right]dxdt,\\
\left(g_{0}\tilde{u}_{n},\eta\right)_{Q_{T}^{*}} & =\int_{0}^{T}\int_{0}^{a}\left(\frac{1}{n}g_{0}\eta\right)dxdt+\left(g_{0}\left(u_{n}+\frac{1}{n}\right),\eta\right)_{Q_{T}}+\int_{0}^{T}\int_{b}^{+\infty}\left(g_{0}\frac{1}{n}h\eta\right)dxdt,\\
\left(\varphi_{0},\eta(x,0)\right)_{\Omega^{*}} & =0+\left(\varphi_{0},\eta(x,0)\right)_{\Omega}+0.
\end{split}    
\end{equation*}

Consequently,
\begin{equation}
    \begin{split} & -\left(\tilde{u}_{n},\partial_{t}\eta\right)_{Q_{T}^{*}}+\left(\gamma\tilde{u}_{n}\partial_{x}\tilde{u}_{n},\partial_{x}\eta\right)_{Q_{T}^{*}}+\left(\gamma\left(\partial_{x}\tilde{u}_{n}\right)^{2},\eta\right)_{Q_{T}^{*}}+\left(\frac{\partial_{x}\gamma}{2}\partial_{x}\tilde{u}_{n}^{2},\eta\right)_{Q_{T}^{*}}-\left(g_{0}\tilde{u}_{n},\eta\right)_{Q_{T}^{*}}-\left(\varphi_{0},\eta(x,0)\right)_{\Omega^{*}}\\
= & T_{1}+T_{2}+T_{3},
\end{split}
\end{equation}
where we collect the terms according to our partition of the domain, as illustrated in Figure \ref{fig_sol}, 
\begin{equation*}
    \begin{split}T_{1}= & -\int_{0}^{T}\int_{0}^{a}\frac{1}{n}\left(\partial_{t}\eta\right)dxdt+0+0+0-\int_{0}^{T}\int_{0}^{a}\left(\frac{1}{n}g_{0}\eta\right)dxdt-0,\\
T_{2}= & -\left(u_{n}+\frac{1}{n},\partial_{t}\eta\right)_{Q_{T}}+\left(\gamma(u_{n}+\frac{1}{n})\partial_{x}u_{n},\partial_{x}\eta\right)_{Q_{T}}+\left(\gamma\left(\partial_{x}u_{n}\right)^{2},\eta\right)_{Q_{T}}\\
 & +\left(\left(\partial_{x}\gamma\right)(u_{n}+\frac{1}{n})\partial_{x}u_{n},\eta\right)_{Q_{T}}-\left(g_{0}\left(u_{n}+\frac{1}{n}\right),\eta\right)_{Q_{T}}-\left(\varphi_{0},\eta(x,0)\right)_{\Omega},\\
T_{3}= & -\int_{0}^{T}\int_{b}^{+\infty}\frac{1}{n}h\left(\partial_{t}\eta\right)dxdt+\int_{0}^{T}\int_{b}^{+\infty}\gamma\left(\frac{1}{n}h\right)\left(\frac{1}{n}\partial_{x}h\right)\left(\partial_{x}\eta\right)dxdt+\int_{0}^{T}\int_{b}^{+\infty}\left[\gamma\left(\frac{1}{n}\partial_{x}h\right)^{2}\eta\right]dxdt\\
 & +\int_{0}^{T}\int_{b}^{+\infty}\left[\frac{\partial_{x}\gamma}{2}\left(\frac{1}{n^{2}}\partial_{x}h^{2}\right)\eta\right]dxdt-\int_{0}^{T}\int_{b}^{+\infty}\left(g_{0}\frac{1}{n}h\eta\right)dxdt-0.
\end{split}
\end{equation*}

We claim that up to a subsequence,

\begin{equation}\label{limit_u}
    \begin{split}\lim_{n\rightarrow\infty} & \left\{ -\left(\tilde{u}_{n},\partial_{t}\eta\right)_{Q_{T}^{*}}+\left(\gamma\tilde{u}_{n}\partial_{x}\tilde{u}_{n},\partial_{x}\eta\right)_{Q_{T}^{*}}+\left(\gamma\left(\partial_{x}\tilde{u}_{n}\right)^{2},\eta\right)_{Q_{T}^{*}}+\left(\frac{\partial_{x}\gamma}{2}\partial_{x}\tilde{u}_{n}^{2},\eta\right)_{Q_{T}^{*}}-\left(g_{0}\tilde{u}_{n},\eta\right)_{Q_{T}^{*}}-\left(\varphi_{0},\eta(x,0)\right)_{\Omega^{*}}\right\} \\
= & -\left(u,\partial_{t}\eta\right)_{Q_{T}^{*}}+\left(\gamma u\partial_{x}u,\partial_{x}\eta\right)_{Q_{T}^{*}}+\left(\gamma\left(\partial_{x}u\right)^{2},\eta\right)_{Q_{T}^{*}}+\left(\frac{\partial_{x}\gamma}{2}\partial_{x}u^{2},\eta\right)_{Q_{T}^{*}}-\left(g_{0}u,\eta\right)_{Q_{T}^{*}}-\left(\varphi_{0},\eta(x,0)\right)_{\Omega^{*}},
\end{split}
\end{equation}

The only non-trivial part is to prove that the third term on the left hand side converges to $\left(-\gamma(\partial_{x}u)^{2},\eta\right)_{Q_{T}}$, for which we take the difference and use H\"older's inequality,
\begin{equation*}
\left(\gamma\left[\left(\partial_{x}\tilde{u}_{n}\right)^{2}-(\partial_{x}u)^{2}\right],\eta\right)_{Q_{T}}\leq C\lVert\partial_{x}\tilde{u}_{n}-\partial_{x}u\rVert_{L^{2}(Q_{T})}\cdot\lVert\partial_{x}\tilde{u}_{n}+\partial_{x}u\rVert_{L^{2}(Q_{T})}
\end{equation*}

Since $\partial_{x}\tilde{u}_{n}$ converge to $\partial_{x}u$ a.e. in $Q_{T}$ and $\partial_{x}\tilde{u}_{n}$ is uniformly bounded in $L^{4}(Q_{T})$, by Lemma \ref{lemma_ae_imply_strong}, 
\begin{equation*}
    \lim_{n\rightarrow\infty} \lVert \partial_{x}\tilde{u}_{n} - \partial_{x}u \rVert_{L^{2}(Q_{T})} = 0
\end{equation*}

Now it remains to prove that
\begin{equation*}
    \lim_{n\rightarrow +\infty} T_{1} + T_{2} +T_{3} = 0.
\end{equation*}

It can be easily verified that $T_{1}$ and $T_{3}$ goes to zero, given the definition of lifted extension $\tilde{u}_{n}$ in (\ref{def_extension}). For the second row, test Equation(\ref{divergence}) with $\eta$ and integrate by parts, it follows that 
\begin{equation*}
    T_{2}=-\left(\frac{1}{n},\partial_{t}\eta\right)_{Q_{T}}+\gamma(b)\int_{0}^{T}\frac{1}{n}\left(\partial_{x}u_{n}(b^{-},t)\right)\eta(b,t)dt-\gamma(a)\int_{0}^{T}\frac{1}{n}\left(\partial_{x}u_{n}(a^{+},t)\right)\eta(a,t)dt-\frac{1}{n}\left(g_{0},\eta\right)_{Q_{T}}.
\end{equation*}

By Lemma \ref{lemma_boundaryterm} the second and third term are in the order of $\mathcal{O}(\frac{1}{\sqrt{n}})$, and the other two terms are in the order of $\mathcal{O}(\frac{1}{n})$, therefore $T_{2}$ goes to zero.

To conclude, the limit function $u$ satisfies the following weak form of the equation,
\begin{equation*}
   -\left(u,\partial_{t}\eta\right)_{Q_{T}^{*}}+\left(\frac{\gamma}{2}\partial_{x}u^{2},\partial_{x}\eta\right)_{Q_{T}^{*}}=\left(-\gamma(\partial_{x}u)^{2},\eta\right)_{Q_{T}^{*}}+\left(-\left(\frac{\partial_{x}\gamma}{2}\right)\partial_{x}u^{2},\eta\right)_{Q_{T}^{*}}+\left(g_{0}u,\eta\right)_{Q_{T}^{*}}+\left(\varphi_{0},\eta(x,0)\right)_{\Omega^{*}}.
\end{equation*}

\end{proof}

\newpage
\bibliographystyle{plain}
\bibliography{main.bib}

\section*{Appendix A: Existence of classical solutions for problem (\ref{eq_Sn})}
According to Theorem 6.1 in Chapter V of Ladyzenskaja et. al.'s book\cite{ladyzhenskaya1968linear}, the following conditions (a) to (f) are sufficient for Theorem \ref{thm_classical_sol}. 

Recall that
\begin{equation*}
    \begin{split}a_{1}(x,t,u,p) & \coloneqq\gamma\mathcal{P}_{n}(u)p\\
\tilde{a}(x,t,u,p) & \coloneqq\gamma\mathcal{P}_{n}^{\prime}(u)p^{2}+\left(\partial_{x}\gamma\right)\mathcal{P}_{n}(u)p-g_{0}(x)u\\
A(x,t,u,p) & \coloneqq-g_{0}(x)u
\end{split}
\end{equation*}

and 
\begin{equation}\label{psi_def}
\varphi(x,t)\coloneqq\varphi_{0}(x)+\left[\gamma\mathcal{P}_{n}^{\prime}(\varphi_{0})\partial_{x}^{2}\varphi_{0}+g_{0}\varphi_{0}\right]t
\end{equation}

We will verify the conditions one by one.

\begin{enumerate}[label=(\alph*)]
    \item For $(x,t) \in \overline{Q_{T}}$ and arbitrary $u$, the diffusion term is strictly coercive,
    \begin{equation*}
        \frac{\partial a_{1}}{\partial p}(x,t,u,p)=\gamma\mathcal{P}_{n}(u)\geq\frac{\gamma_{m}}{2n}>0,
    \end{equation*}

    and the reaction term has the following lower bound,
    \begin{equation*}
        A(x,t,u,0)u=-g_{0}(x)u^{2}\geq-\max(|g_{0}|)u^{2}.
    \end{equation*}

    \item  For $(x,t) \in \overline{Q_{T}}$, when $|u|\leq M$, for arbitrary $p$, the operators are bounded in the following sense.

    \begin{equation*}
        \frac{\partial a_{1}}{\partial p}(x,t,u,p) =\gamma\mathcal{P}_{n}(u) \leq \max{\left(\gamma\right)}\left(M+1\right),
    \end{equation*}

    and
    \begin{equation*}
        \begin{split} & \left(\lvert a_{1}\rvert+\lvert\frac{\partial a_{1}}{\partial u}\rvert\right)\left(1+\lvert p\rvert\right)+\lvert\frac{\partial a_{1}}{\partial x}\rvert+\lvert\tilde{a}\rvert\\
= & \left(\gamma\mathcal{P}_{n}(u)\lvert p\rvert+\gamma\mathcal{P}_{n}^{\prime}(u)\lvert p\rvert\right)\left(1+\lvert p\rvert\right)+\left(\partial_{x}\gamma\right)\mathcal{P}_{n}(u)\lvert p\rvert+\lvert\gamma\mathcal{P}_{n}^{\prime}(u)p^{2}+\left(\partial_{x}\gamma\right)\mathcal{P}_{n}(u)p-g_{0}(x)u\rvert\\
\leq & \left(\gamma\mathcal{P}_{n}(u)\lvert p\rvert+\gamma\mathcal{P}_{n}^{\prime}(u)\lvert p\rvert\right)\left(1+\lvert p\rvert\right)+\left(\partial_{x}\gamma\right)\mathcal{P}_{n}(u)\lvert p\rvert+\gamma\mathcal{P}_{n}^{\prime}(u)p^{2}+\left(\partial_{x}\gamma\right)\mathcal{P}_{n}(u)\lvert p\rvert+\lvert g_{0}(x)u\rvert\\
\leq & \mu(M,b,\max(|g_{0}|))\left(1+\lvert p\rvert\right)^{2}.
\end{split}
    \end{equation*}
    \item For $(x,t) \in \overline{Q_{T}}$, $|u| \leq M$ and $|p| \leq M_{1}$, the functions $a_{1}$, $\tilde{a}$, $\frac{\partial a_{1}}{\partial p}$, $\frac{\partial a_{1}}{\partial u}$, and $\frac{\partial a_{1}}{\partial x}$ are arbitrarily smooth in $x$, $t$, $u$ and $p$, therefore they satisfy any H\"older continuity condition. 
    \item 
    Note that
    \begin{equation*}
        \begin{split}\frac{\partial a_{1}}{\partial u} & =\gamma\mathcal{P}_{n}^{\prime}(u)p,\\
\frac{\partial\tilde{a}}{\partial p} & =2\gamma\mathcal{P}_{n}^{\prime}(u)p+\left(\partial_{x}\gamma\right)\mathcal{P}_{n}(u),\\
\frac{\partial\tilde{a}}{\partial u} & =\gamma\mathcal{P}_{n}^{\prime\prime}(u)p^{2}+\left(\partial_{x}\gamma\right)\mathcal{P}_{n}^{\prime}(u)p-g_{0}(x).
\end{split}
    \end{equation*}
    For $(x,t) \in \overline{Q_{T}}$, $|u| \leq M$ and $|p| \leq M_{1}$, all the above terms are bounded by a constant $C(M, M_{1}, \mathcal{P}_{n}, g_{0}, \Omega)$.
    
    In addition, neither $\tilde{a}$ nor $a_{1}$ depend on $t$, therefore condition (d) is satisfied.
    \item By definition of $\varphi $ in Equation(\ref{psi_def}), $\varphi $ is arbitrarily smooth in $\overline{Q_{T}}$. In addition, for $x\in \partial \Omega$ and $t=0$, the following identity holds,
    \begin{equation*}
    \partial_{t}\varphi(x,t)=\gamma\mathcal{P}_{n}^{\prime}(\varphi_{0})\partial_{x}^{2}\varphi_{0}+g_{0}\varphi_{0}=\gamma\mathcal{P}_{n}^{\prime}(\varphi)\partial_{x}^{2}\varphi+g_{0}\varphi
    \end{equation*}
    \item It is trivial that the boundary $\partial \Omega$ satisfies any H\"older continuity condition.
\end{enumerate}

\section*{Appendix B: Nontrivial steady states in the weak sense} \label{sec_equilibrium} 
If $u_{\infty}(x)$ is a steady state for Equation \ref{eq_pme}, then we have
\begin{equation*}
\gamma(x)u_{\infty}\partial_{x}^{2}u_{\infty}+g_{0}u_{\infty}=0, \forall x\in(\lambda,+\infty)
\end{equation*}

The following equation
\begin{equation*}
\gamma(x)\partial_{x}^{2}\mathcal{M}+g_{0} = \gamma(x)\partial_{x}^{2}\mathcal{M}+ \gamma(x)\partial_{x}\left(f_{0}-\partial_{x}u_{0}\right) = 0
\end{equation*}
renders a family of solutions:
\begin{equation}\label{eq_calM}
\mathcal{M}(x)=\varphi_{0}(x)-\int_{\lambda}^{x}f_{0}(s)ds+C_{1}x+C_{2}
\end{equation}
with both $C_1$ and $C_2$ being arbitrary real valued constants.

Consider the derivative of $\mathcal{M}(x)$ as $x$ goes to infinity:
\begin{equation*}
    \lim_{x\rightarrow+\infty}\mathcal{M}'(x)=\lim_{x\rightarrow+\infty}\left(\varphi_{0}'(x)-f_{0}(x)+C_{1}\right)=C_{1}
\end{equation*}

If we expect our steady state $u_{\infty}(x)$ to satisfy the following conditions:
\begin{enumerate}
    %\item $u_{\infty}(\lambda)=0$
    \item $\lim_{x\rightarrow+\infty} u_{\infty}(x)=0$
    \item $u_{\infty}(x)\geq0,\forall x\in(\lambda,+\infty)$
\end{enumerate}
then it can only be one of two cases:

\begin{enumerate}
    \item The steady state $u_{\infty}(x)$ is strictly positive for general initial conditions:
    \begin{equation*}
    u_{\infty}(x)=\varphi_{0}(x)+\int_{x}^{+\infty}f_{0}(s)ds
    \end{equation*}
    \item The steady state $u_{\infty}(x)$ is a continuous concatenation of $\mathcal{M}(x)$ and $0$, for example $\mathcal{M}^{+}(x)=\max \{\mathcal M(x),0\}$. 
\end{enumerate}

Note that if we further require $u_{\infty}(\lambda)=0$, then the first case will be excluded. However, even in the second case there are infinitely many admissible steady states.

A fundamental problem following from the above observation is to sieve well-conditioned initial data and to include extra constraints from physics that guarantees uniqueness. 

\end{document}